\begin{document}
	
	\newtheorem{theorem}{Theorem}[section]
	\newtheorem{prop}[theorem]{Proposition}
	\newtheorem{lemma}[theorem]{Lemma}
	\newtheorem{cor}[theorem]{Corollary}
	\newtheorem{cond}[theorem]{Condition}
	\newtheorem{ing}[theorem]{Ingredients}
	\newtheorem{conj}[theorem]{Conjecture}
	\newtheorem{claim}[theorem]{Claim}
	\newtheorem{constr}[theorem]{Construction}
	\newtheorem{rem}[theorem]{Remark}
	\newtheorem{defn}[theorem]{Definition}
	\newtheorem{eg}[theorem]{Example}
	\newtheorem{rmk}[theorem]{Remark}
	
	\newtheorem*{theorem*}{Theorem}
	\newtheorem*{modf}{Modification for arbitrary $n$}
	\newtheorem{qn}[theorem]{Question}
	\newtheorem{condn}[theorem]{Condition}
	\newtheorem*{BGIT}{Bounded Geodesic Image Theorem}
	\newtheorem*{BI}{Behrstock Inequality}
	\newtheorem*{QCH}{Wise's Quasiconvex Hierarchy Theorem}

	\numberwithin{equation}{section}
	\newtheorem{lem}[equation]{Lemma}
	\newtheorem{sublem}[equation]{Sublemma}

	\newtheorem{que}[equation]{Question}
	\newtheorem{ob}[equation]{Observation}
	\newtheorem{definition}[equation]{Definition}
	
	\newtheorem{remark}[equation]{Remark}
	\newtheorem{example}[equation]{Example}

	\newcommand*{\inc}{\ensuremath{\mathcal{I}}}
	\newcommand*{\im}{\ensuremath{\textmd{Im}\ }}
	
	\newcommand{\A}{{\mathcal A}}
	\newcommand{\B}{\mathcal{B}}
	\newcommand{\C}{{\mathcal C}}
	\renewcommand{\c}{\mathbb C}
	\newcommand{\D}{\mathbb{D}}
	\renewcommand{\d}{\mathcal{D}}
	\newcommand{\e}{{\mathcal E}}
	\newcommand{\E}{\mathbb E}
	\newcommand{\F}{{\mathcal F}}
	\newcommand{\G}{{\mathcal G}}
	\renewcommand{\H}{\mathbb H}
	\newcommand{\h}{{\mathcal H}}
	\newcommand{\I}{{\mathcal I}}
	\newcommand{\K}{{\mathcal K}}
	\renewcommand{\L}{{\mathcal L}}
	\newcommand{\M}{{\mathcal M}}
	\newcommand{\N}{\mathbb N}
	\newcommand{\n}{\mathcal{N}}
	\newcommand{\p}{{\mathcal P}}
	\renewcommand{\P}{{\mathcal P}}
	\newcommand{\Q}{\mathbb Q}
	\newcommand{\R}{\mathbb R}
	\newcommand{\calr}{{\mathcal R}}
	\renewcommand{\S}{{\mathcal S}}
	\newcommand{\T}{{\mathcal T}}
	\newcommand{\U}{{\mathcal U}}
	\newcommand{\V}{{\mathcal V}}
	\newcommand{\W}{{\mathcal W}}
	\newcommand{\w}{{\mathcal W}}
	\newcommand{\Z}{\mathbb Z}
	\newcommand{\geodim}{\operatorname{GeomDim}}
	\newcommand{\topdim}{\operatorname{TopDim}}
	
	\newcommand{\acts}{\curvearrowright}
	\newcommand{\al}{\alpha}
	\newcommand{\be}{\beta}
	\newcommand{\cat}{\operatorname{CAT}}
	\newcommand{\cc}{\operatorname{CC}}
	\newcommand{\ccc}{\operatorname{CCC}}
	\newcommand{\De}{\Delta}
	\newcommand{\diam}{\operatorname{diam}}
	\newcommand{\dist}{\operatorname{dist}}
	\newcommand{\face}{\operatorname{Face}}
	\newcommand{\ga}{\gamma}
	\newcommand{\Ga}{\Gamma}
	\newcommand{\id}{\operatorname{id}}

	\newcommand{\map}{\rightarrow}
	\newcommand{\boundary}{\partial}
	\newcommand{\integers}{{\mathbb Z}}
	\newcommand{\natls}{{\mathbb N}}
	\newcommand{\ratls}{{\mathbb Q}}
	\newcommand{\reals}{{\mathbb R}}
	\newcommand{\proj}{{\mathbb P}}
	\newcommand{\lhp}{{\mathbb L}}
	\newcommand{\tr}{{\operatorname{Tread}}}
	\newcommand{\rs}{{\operatorname{Riser}}}
	\newcommand{\tube}{{\mathbb T}}
	\newcommand{\cusp}{{\mathbb P}}
	\newcommand\AAA{{\mathcal A}}
	\newcommand\BB{{\mathcal B}}
	\newcommand\CC{{\mathcal C}}
	\newcommand\ccd{{{\mathcal C}_\Delta}}
	\newcommand\DD{{\mathcal D}}
	\newcommand\EE{{\mathcal E}}
	\newcommand\FF{{\mathcal F}}
	
	\newcommand\GG{{\mathcal G}}
	\newcommand\GGs{{{\mathcal G}^*}}
	\newcommand\HH{{\mathcal H}}
	\newcommand\II{{\mathcal I}}
	\newcommand\JJ{{\mathcal J}}
	\newcommand\KK{{\mathcal K}}
	\newcommand\LL{{\mathcal L}}
	\newcommand\MM{{\mathcal M}}
	\newcommand\NN{{\mathcal N}}
	\newcommand\OO{{\mathcal O}}
	\newcommand\PP{{\mathcal P}}
	\newcommand\QQ{{\mathcal Q}}
	\newcommand\RR{{\mathcal R}}
	\newcommand\SSS{{\mathcal S}}
	
	\newcommand\TT{{\mathcal T}}
	\newcommand\ttt{{\mathcal T}_T}
	\newcommand\tT{{\widetilde T}}
	\newcommand\UU{{\mathcal U}}
	\newcommand\VV{{\mathcal V}}
	\newcommand\WW{{\mathcal W}}
	\newcommand\XX{{\mathcal X}}
	\newcommand\YY{{\mathcal Y}}
	\newcommand\ZZ{{\mathcal Z}}
	\newcommand\CH{{\CC\HH}}
	\newcommand\TC{{\TT\CC}}
	\newcommand\EXH{{ \EE (X, \HH )}}
	\newcommand\GXH{{ \GG (X, \HH )}}
	\newcommand\GYH{{ \GG (Y, \HH )}}
	\newcommand\PEX{{\PP\EE  (X, \HH , \GG , \LL )}}
	\newcommand\MF{{\MM\FF}}
	\newcommand\PMF{{\PP\kern-2pt\MM\FF}}
	\newcommand\ML{{\MM\LL}}
	\newcommand\mr{{\RR_\MM}}
	\newcommand\tmr{{\til{\RR_\MM}}}
	\newcommand\PML{{\PP\kern-2pt\MM\LL}}
	\newcommand\GL{{\GG\LL}}
	\newcommand\Pol{{\mathcal P}}
	\newcommand\half{{\textstyle{\frac12}}}
	\newcommand\Half{{\frac12}}
	\newcommand\Mod{\operatorname{Mod}}
	\newcommand\Area{\operatorname{Area}}
	\newcommand\ep{\epsilon}
	\newcommand\hhat{\widehat}
	\newcommand\Proj{{\mathbf P}}
	\newcommand\Hyp{{\mathbf H}}
	\newcommand\bN{\mathbb{N}}
	\newcommand\s{{\Sigma}}
	\renewcommand\P{{\mathbb P}}
	\newcommand\tga{\til{\gamma}}
	\newcommand\til{\widetilde}
	\newcommand\length{\operatorname{length}}
	\newcommand\BU{\operatorname{BU}}
	\newcommand\gesim{\succ}
	\newcommand\lesim{\prec}
	\newcommand\simle{\lesim}
	\newcommand\simge{\gesim}
	\newcommand{\simmult}{\asymp}
	\newcommand{\simadd}{\mathrel{\overset{\text{\tiny $+$}}{\sim}}}
	\newcommand{\ssm}{\setminus}

	\newcommand{\pair}[1]{\langle #1\rangle}

	\newcommand{\inj}{\operatorname{inj}}
	\newcommand{\pleat}{\operatorname{\mathbf{pleat}}}
	\newcommand{\short}{\operatorname{\mathbf{short}}}
	\newcommand{\vertices}{\operatorname{vert}}
	\newcommand{\collar}{\operatorname{\mathbf{collar}}}
	\newcommand{\bcollar}{\operatorname{\overline{\mathbf{collar}}}}

	\newcommand{\tprec}{\prec_t}
	\newcommand{\fprec}{\prec_f}
	\newcommand{\bprec}{\prec_b}
	\newcommand{\pprec}{\prec_p}
	\newcommand{\ppreceq}{\preceq_p}
	\newcommand{\sprec}{\prec_s}
	\newcommand{\cpreceq}{\preceq_c}
	\newcommand{\cprec}{\prec_c}
	\newcommand{\topprec}{\prec_{\rm top}}
	\newcommand{\Topprec}{\prec_{\rm TOP}}
	\newcommand{\fsub}{\mathrel{\scriptstyle\searrow}}
	\newcommand{\bsub}{\mathrel{\scriptstyle\swarrow}}
	\newcommand{\fsubd}{\mathrel{{\scriptstyle\searrow}\kern-1ex^d\kern0.5ex}}
	\newcommand{\bsubd}{\mathrel{{\scriptstyle\swarrow}\kern-1.6ex^d\kern0.8ex}}
	\newcommand{\fsubeq}{\mathrel{\raise-.7ex\hbox{$\overset{\searrow}{=}$}}}
	\newcommand{\bsubeq}{\mathrel{\raise-.7ex\hbox{$\overset{\swarrow}{=}$}}}
	\newcommand{\tw}{\operatorname{tw}}
	\newcommand{\base}{\operatorname{base}}
	\newcommand{\trans}{\operatorname{trans}}
	\newcommand{\rest}{|_}
	\newcommand{\bbar}{\overline}
	\newcommand{\UML}{\operatorname{\UU\MM\LL}}
	\renewcommand{\d}{\operatorname{diam}}
	\newcommand{\hs}{{\operatorname{hs}}}
	\newcommand{\EL}{\mathcal{EL}}
	\newcommand{\tsum}{\sideset{}{'}\sum}
	\newcommand{\tsh}[1]{\left\{\kern-.9ex\left\{#1\right\}\kern-.9ex\right\}}
	\newcommand{\Tsh}[2]{\tsh{#2}_{#1}}
	\newcommand{\qeq}{\mathrel{\approx}}
	\newcommand{\Qeq}[1]{\mathrel{\approx_{#1}}}
	\newcommand{\qle}{\lesssim}
	\newcommand{\Qle}[1]{\mathrel{\lesssim_{#1}}}
	\newcommand{\simp}{\operatorname{simp}}
	\newcommand{\vsucc}{\operatorname{succ}}
	\newcommand{\vpred}{\operatorname{pred}}
	\newcommand\sleft{_{\text{left}}}
	\newcommand\sright{_{\text{right}}}
	\newcommand\sbtop{_{\text{top}}}
	\newcommand\sbot{_{\text{bot}}}
	\newcommand\dpe{d_{pel}}
	\newcommand\de{d_{e}}
	\newcommand\srr{_{\mathbf r}}
	\newcommand\geod{\operatorname{\mathbf g}}
	\newcommand\mtorus[1]{\boundary U(#1)}
	\newcommand\Aleft[1]{\A\sleft(#1)}
	\newcommand\Aright[1]{\A\sright(#1)}
	\newcommand\Atop[1]{\A\sbtop(#1)}
	\newcommand\Abot[1]{\A\sbot(#1)}
	\newcommand\boundvert{{\boundary_{||}}}
	\newcommand\storus[1]{U(#1)}
	\newcommand\Momega{\omega_M}
	\newcommand\nomega{\omega_\nu}
	\newcommand\twist{\operatorname{tw}}
	\newcommand\SSSS{{\til{\mathcal S}}}
	\newcommand\modl{M_\nu}
	\newcommand\MT{{\mathbb T}}
	\newcommand\dw{{d_{weld}}}
	\newcommand\dt{{d_{te}}}
	\newcommand\Teich{{\operatorname{Teich}}}
	\renewcommand{\Re}{\operatorname{Re}}
	\renewcommand{\Im}{\operatorname{Im}}
	\newcommand{\mc}{\mathcal}
	\newcommand{\ccs}{{\CC(S)}}
	\newcommand{\mtdw}{{(\til{M_T},\dw)}}
	\newcommand{\tmtdw}{{(\til{M_T},\dw)}}
	\newcommand{\tmldw}{{(\til{M_l},\dw)}}
	\newcommand{\sxy}{{\Sigma(x,y)}}
	\newcommand{\ts}{{\til{\Sigma}}}
	\newcommand{\tsxy}{{\til{\Sigma}(x,y)}}
	\newcommand{\tmtdt}{{(\til{M_T},\dt)}}
	\newcommand{\tmldt}{{(\til{M_l},\dt)}}
	\newcommand{\trvw}{{\tr_{vw}}}
	\newcommand{\ttrvw}{{\til{\tr_{vw}}}}
	\newcommand{\but}{{\BU(T)}}
	\newcommand{\ilkv}{{i(lk(v))}}
	\newcommand{\pslc}{{\mathrm{PSL}_2 (\mathbb{C})}}
	\newcommand{\tttt}{{\til{\ttt}}}
	\newcommand{\bcomment}[1]{\textcolor{blue}{#1}}
	\newcommand{\sqQ}{{\sqrt{\mathbb Q}}}
	\newcommand{\PSL}{{\mathrm{PSL}_2 (\mathbb{C})}}
	\newcommand{\SL}{{\mathrm{SL}_2 (\mathbb{C})}}
	\newcommand{\pslr}{{\mathrm{PSL}_2 (\mathbb{R})}}
	\newcommand{\pslz}{{\mathrm{PSL}_2 (\mathbb{Z})}}
	\newcommand{\pslq}{{\mathrm{PSL}_2 (\mathbb{Q})}}
	\newcommand{\pglq}{{\mathrm{PGL}_2 (\mathbb{Q})}}
	\newcommand{\pslo}{{\mathrm{PSL}_2 (\OO)}}
	\newcommand{\slq}{{\mathrm{SL}_2 (\mathbb{Q})}}
	\newcommand{\slr}{{\mathrm{SL}_2 (\mathbb{R})}}
	\newcommand{\slz}{{\mathrm{SL}_2 (\mathbb{Z})}}
	\newcommand{\slf}{{\mathrm{SL}_2 (F)}}
	\newcommand{\slo}{{\mathrm{SL}_2 (\mathbb{\OO})}}
	\newcommand{\so}{{\mathrm{SO}}}
	\newcommand{\su}{{\mathrm{SU}}}
	\newcommand{\Gr}{{\mathcal G}}
	\newcommand{\eps}{{\epsilon}}
	\newcommand{\comm}{\operatorname{Comm_\pslr}}
	\newcommand{\comme}{\operatorname{Comm}}
	\newcommand{\commpa}{{\comme_{pa} (\Ga,K)}}
	
	\newcommand{\Star}{\operatorname{star}}
	\newcommand{\jfmchange}[1]{{\color{purple}{#1}}}
	
	\newcommand{\defstyle}[1]{\textbf{#1}}
	\newcommand{\emphstyle}[1]{\emph{#1}}

\newcommand{\aut}{\operatorname{Aut}}
\newcommand{\isom}{\operatorname{Isom}}
\newcommand{\La}{\Lambda}

\newcommand{\lra}{\longrightarrow}
\newcommand{\out}{\operatorname{Out}}
\newcommand{\ol}{\overline}
\newcommand{\qi}{\operatorname{QI}}
\newcommand{\ra}{\rightarrow}
\newcommand{\res}{\mathsf{S}^r}
\newcommand{\restr}{\mbox{\Large \(|\)\normalsize}}
\newcommand{\si}{\sigma}
\newcommand{\stab}{\operatorname{Stab}}
\newcommand{\st}{\textrm{st}}

	\title{Indiscrete Common Commensurators}
	
	\author{Jingyin Huang}
	
	\address{Department of Mathematics, Ohio State University,
		231 W. 18th Ave,	
		Columbus, OH 43210,                                                  
		USA}
	
	\email{huang.929@osu.edu}
	\email{huangjingyin@gmail.com}
	\urladdr{https://sites.google.com/site/huangmath0/}

	\author{Mahan Mj}

	\address{School of Mathematics, Tata Institute of Fundamental Research, Mumbai-40005, India}
	\email{mahan@math.tifr.res.in}
	\email{mahan.mj@gmail.com}
	\urladdr{http://www.math.tifr.res.in/~mahan}

	\subjclass[2010]{20F65, 20F67 }
	\keywords{commensurator, CAT(0) cube complex, Bestvina-Brady group}

	\thanks{Both authors were   supported in part by  the Institut Henri Poincare  (UAR 839 CNRS-Sorbonne Universite), LabEx CARMIN, ANR-10-LABX-59-01, during their participation in the trimester program "Groups acting on fractals, Hyperbolicity and Self-similarity", April-June 2022.
	JH is partially supported by a Sloan fellowship.	MM is supported by  the Department of Atomic Energy, Government of India, under project no.12-R\&D-TFR-5.01-0500,  by an endowment of the Infosys Foundation.
		and by   a DST JC Bose Fellowship. }

	\begin{abstract}
		We develop a framework for common commensurators of discrete subgroups 
		of lattices in isometry groups of CAT(0) spaces. We show that the Greenberg-Shalom hypothesis about discreteness of common commensurators of Zariski dense subgroups and lattices  fails in this generality, even if one imposes strong finiteness conditions. We analyze some examples due to Burger and Mozes in this context and show that they have discrete
		common commensurator.
	\end{abstract}
	
	\maketitle

	\date{\today}

	\tableofcontents
	
	\section{Introduction} A question dating back to Greenberg 
	\cite{greenberg-comm} for $SO(n,1) $ and Shalom \cite{llr-comm} for general semi-simple Lie groups asks the following. We shall assume in this paper that all 
	locally compact groups $G$ have no compact factors.
	
	\begin{qn}\label{qn-shalom}
		Let $G$ be a semisimple Lie group with finite center. Let $K < G$ be a discrete, Zariski-dense subgroup of $G$ of infinite covolume.
	Is the commensurator $\comme_G(K)\subset G$ discrete? In particular, if $\Lambda <G $ is an arithmetic lattice, and $K < \Lambda$ is Zariski-dense, is 
	$\comme_G(K)\subset G$ discrete?
	\end{qn}

Affirmative answers to Question~\ref{qn-shalom} have been obtained in a number of cases
\cite{llr-comm,mahan-commens,kob-mj0,koberda-mj,fmv}. The special case $K < \Lambda < G$ in Question~\ref{qn-shalom} with $\Lambda$ a lattice is the subject of
\cite{shalom-willis,kob-mj0,koberda-mj,fmv}. 
A number of implications of a positive answer to Question~\ref{qn-shalom}, referred to as the Greenberg-Shalom Hypothesis in \cite{bfmv} have been explicated in \cite{bfmv}. 

In this paper, we  extend the scope of Question~\ref{qn-shalom} to the situation 
	$G=Isom(W)$, where $W$ is a finite dimensional CAT(0) cell complex with finitely many cell types:
	
		\begin{qn}\label{qn-shalom1}
		Let $G=Isom(W)$, denote the group of cellular automorphisms of a    finite dimensional CAT(0) cell complex  $W$ with finitely many cell types.
		Let $\Lambda < G$ denote a lattice, and $K < \Lambda$ be \emph{geometrically dense} (in the sense of \cite{caprace-monod1,caprace-monod2}), i.e.\ $K$ leaves
		no proper convex subset of $W$ invariant, nor does $K$ have a fixed point on $\partial W$.
		Is the common commensurator $\comme_G(K)\cap\comme_G(\Lambda) \subset G$ discrete? 
	\end{qn}

	A theorem of Borel \cite{borel} (see Proposition~\ref{commens=commesn}) says that, in the situation of the special case in Question~\ref{qn-shalom}, with $K < \Lambda < G$
	and $\Lambda$ arithmetic,  $\comme_G(K)\subset\comme_G(\Lambda)$ (under mild assumptions), so that the common commensurator equals the commensurator of $K$.
	In the more geometric situation of Question~\ref{qn-shalom1}, Borel's conclusion needs to be added to the hypothesis ahead of time; hence we look at $\comme_G(K)\cap\comme_G(\Lambda)$
	instead of simply $\comme_G(K)$.  In fact, Burger-Mozes \cite[Section 8]{bm-divergence} already provide examples where $\comme_G(K)$ is indiscrete, with $G$ the automorphism
	group of a tree, and $K$ the commutator subgroup of a lattice $\Lambda$ in $G$. However, it turns out that in this case, $\comme_G(K)$ is  not contained in $\comme_G(\Lambda)$;  we will in fact show that $\comme_G(K)\cap\comme_G(\Lambda) \subset G$ is discrete. In particular, the example in \cite[Section 8]{bm-divergence} does not provide 
	a counterexample to Question~\ref{qn-shalom1}. There is  a further elementary reason for  considering  $\comme_G(K)\cap\comme_G(\Lambda)$ instead of  just $\comme_G(K)$ as we explain at the end of this introduction.
	

\medskip
	
	In this paper,  we provide a 
	 number of counterexamples  to Question~\ref{qn-shalom1}.  These examples may be arranged to have additional finiteness properties. 
	 	We start with a negative answer to Question~\ref{qn-shalom1} for $W$ a tree and $G$ its automorphism group (see Theorem~\ref{thm-wisecsc}
	 	for a precise statement).


	\begin{theorem}
		\label{thm:main1}
	There exists a locally finite regular tree $T$, a lattice $L$ in $G=\aut(T)$, and an infinite index infinite normal subgroup $K$ of $L$ such that 
	$$\comme_G(K)\cap \comme_G(L)$$ is not discrete in $G$. Equivalently, 
	$K, L$ have an indiscrete common commensurator in $\aut(T)$.
	\end{theorem}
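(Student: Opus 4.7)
The plan is to construct a uniform lattice $L < G = \aut(T)$ together with a normal subgroup $K \lhd L$ which is \emph{uniformly characteristic}: for every finite-index subgroup $L_0 \leq L$, the intersection $K \cap L_0$ is invariant under every isomorphism from $L_0$ onto another finite-index subgroup of $L$. Granting this, any $g \in \comme_G(L)$ conjugates $L_0 := L \cap g^{-1}Lg$ isomorphically onto $gL_0 g^{-1} \leq L$, and must carry $K \cap L_0$ onto $K \cap g L_0 g^{-1}$. Hence $gKg^{-1} \cap g L_0 g^{-1} = K \cap g L_0 g^{-1}$, a subgroup of finite index in $K$; therefore $g \in \comme_G(K)$. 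Thus $\comme_G(L) \subseteq \comme_G(K)$, and it suffices to produce such an $L$ with indiscrete commensurator in $\aut(T)$ together with a uniformly characteristic $K$ of infinite order and infinite index.

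A natural candidate is arithmetic: fix a rational prime $p$ and let $L$ be a torsion-free cocompact arithmetic lattice in $H := \mathrm{PGL}_2(\Q_p)$ arising from a quaternion division algebra over $\Q$ ramified at $p$ and $\infty$. After passing to a torsion-free finite-index subgroup via Selberg's lemma, $L$ acts freely and cocompactly on the Bruhat--Tits tree $T = T_{p+1}$ of $H$, so $L$ is free of finite rank $\geq 2$ (as the fundamental group of its compact quotient graph) and embeds as a uniform lattice in $G = \aut(T)$. Since $H$ has rank one, Margulis's commensurator characterization of arithmeticity gives that $\comme_H(L)$ is dense in $H$; as $H \hookrightarrow \aut(T)$ continuously with indiscrete image, this yields indiscreteness of $\comme_G(L)$ in $G$.

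Set $K := [L, L]$. Since $L^{\mathrm{ab}} \cong \Z^n$ with $n \geq 2$, $K$ is both infinite and of infinite index in $L$. By the Nielsen--Schreier theorem every finite-index $L_0 \leq L$ is free, so $L_0^{\mathrm{ab}}$ is torsion-free, and the composition $L_0 \hookrightarrow L \twoheadrightarrow L^{\mathrm{ab}}$ has kernel equal to $[L_0, L_0]$, which is exactly $K \cap L_0$. Since the commutator subgroup is preserved by every isomorphism of groups, $K$ is uniformly characteristic. Combined with the previous paragraph, $\comme_G(K) \cap \comme_G(L) = \comme_G(L)$ is indiscrete in $G$, yielding the theorem.

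The main obstacle I expect is not the characteristicness reduction or its verification for free groups, but the production of $L$ with genuinely indiscrete commensurator in $\aut(T)$; Margulis's theorem handles this cleanly in the arithmetic framework above. Obtaining stronger finiteness properties for $L$, as foreshadowed in Theorem~\ref{thm-wisecsc}, likely requires a more flexible combinatorial construction via CAT(0) cube complexes and Wise's quasi-convex hierarchy theorem, and characteristicness of $K$ inside a non-free $L$ would then demand additional input such as a canonical homological quotient with torsion-free target (so that $K \cap L_0$ remains intrinsically defined on every finite-index subgroup).
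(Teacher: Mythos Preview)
Your reduction via a ``uniformly characteristic'' subgroup is clean, but the verification that $K = [L,L]$ has this property contains a fatal error: the claimed identity $K \cap L_0 = [L_0, L_0]$ for finite-index $L_0 \leq L$ is false. The inclusion $[L_0, L_0] \subseteq K \cap L_0$ always holds, but the reverse does not. Take $L = F_2 = \langle a, b\rangle$ and $L_0 = \ker(L \to \Z/2,\ a \mapsto 1,\ b\mapsto 0)$, free on $\{a^2,\, b,\, aba^{-1}\}$. Then $L_0/[L_0,L_0] \cong \Z^3$ whereas $L_0/(K\cap L_0)$ is the image of $L_0$ in $L^{\mathrm{ab}} \cong \Z^2$, namely $2\Z \oplus \Z$; hence $(K\cap L_0)/[L_0,L_0] \cong \Z$. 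Concretely $[b,a] = b\,(aba^{-1})^{-1}$ lies in $K\cap L_0$ but maps to $(0,1,-1)\neq 0$ in $L_0^{\mathrm{ab}}$. Worse, $K \cap L_0$ is not even characteristic in $L_0$: the automorphism cyclically permuting the three free generators does not fix the line $\Z\cdot(0,1,-1) \subset L_0^{\mathrm{ab}}$. Thus your containment $\comme_G(L) \subseteq \comme_G(K)$ is unproven and the argument collapses. In fact the paper's Proposition~\ref{prop-disccomm} shows that for the Cayley-graph lattice $\Gamma = F_d$ in $\aut(T_{2d})$ with $K_d = [\Gamma,\Gamma]$, the common commensurator $\comme_G(K_d)\cap\comme_G(\Gamma)$ is \emph{discrete}; taking $K$ to be the commutator subgroup of a free lattice is precisely the configuration in which this strategy is known to fail.

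The paper's route is entirely different and does not rely on any intrinsic invariance of $K$ inside $L$. It starts from an irreducible lattice $\Gamma$ acting on a product of trees $T_h \times T_v$ (Wise's complete-square-complex examples); irreducibility forces the image of $\Gamma$ in $\aut(T_h)$ to be indiscrete. One takes $L = \pi_1 M$ to be the stabilizer of a horizontal slice $T_h \times \{x\}$ and $K = \ker(\phi|_L)$ for a surjection $\phi:\Gamma\to\Z$ coming from an HNN description of $\Gamma$. Both $L$ and $K$ are commensurated by a finite-index subgroup $\Gamma' \leq \Gamma$, not via characteristicness but because each arises as a vertex stabilizer for an action on the locally finite tree $T_v$ (of $\Gamma$ and of $\ker\phi$, respectively), and vertex stabilizers in such actions are always commensurated (Lemma~\ref{lem:commensurated}). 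Since $\Gamma'$ has indiscrete image in $\aut(T_h)$, so does the common commensurator.
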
 

 That $K$ and its relatives below are geometrically dense follows easily from its construction and \cite[Theorem 1.10]{caprace-monod1}.
The examples in Theorem~\ref{thm:main1} are built from
  examples due to Wise of irreducible lattices  acting on 
  a product of tress as follows.
\begin{eg}
In \cite[Section 4]{wise-csc}, Wise constructed an irreducible lattice $\Gamma$ acting on a product of two trees $T_h\times T_v$ such that the quotient of $T_h$ under the factor action of $\Gamma$ is a circle with a single vertex. In particular this gives an HNN-extension structure to $\Gamma$ by considering $\Gamma\acts T_h$. Specifically, one has $\Gamma=(F_3)\ast_{F_5}$. Let $\phi: \Gamma \to \Z$ denote the homomorphism
induced by the HNN extension description of $\Gamma$ and let $\Gamma_0=\ker (\phi)$. Let $Y$ denote the cyclic cover of $X$ corresponding to $\Gamma_0$.

Take a vertex $x\in T_v$ and let $L$ be the stabilizer of $T_h\times \{x\}$. Let $K=\ker (\phi|_L)$. Then $K$ is an infinite and infinite index normal subgroup of $L$. We treat $L$ as a lattice in $G=\aut(T_h)$. We show in Theorem~\ref{thm-wisecsc} (stated as Theorem~\ref{thm:main1} above) that 	$\comme_G(K)\cap \comme_G(L)$ is not discrete. In particular, 
$K$ has indiscrete commensurator in $\aut(T_h)$.
\end{eg}

In Proposition~\ref{prop-startingpoint}, we  considerably generalize this example. We start with an irreducible lattice acting on the product of two CAT(0) complexes with finitely many isometry types of cells, satisfying some additional
sufficient conditions. Under such conditions, one obtains a strong negative answer to  Question~\ref{qn-shalom1}. Now we mention an application of Proposition~\ref{prop-startingpoint}.

	
	In Theorem~\ref{thm:main1}, $K$ is infinitely generated. We investigate
	what happens if we impose stronger finiteness properties on $K$, i.e.\
	whether such conditions could possibly
	 lead to a positive answer to Question~\ref{qn-shalom1} for CAT(0) spaces.
	 We show that this is not the case.
	  We  give below a considerably more sophisticated family  of examples,
	where 
	\begin{enumerate}
		\item $T_h$ is replaced by the universal cover of a Salvetti complex
		corresponding to a right-angled Artin group (RAAG), and
\item $K$ has strong finiteness properties.		
	\end{enumerate} 
	 In fact, $K$ is a Bestvina-Brady group of a different RAAG. More precisely (see Theorem~\ref{thm-bb} and Corollary~\ref{cor-bb}),
	\begin{theorem}\label{thm-bb-intro}
		Let $\GG$ be a finite simplicial graph, with a base vertex $v$ and a vertex $w\neq v$. Let $\LL$ be a wedge of four copies of $\GG$ along $v$. Let $X_\LL$ be the universal cover of the Salvetti complex of $\LL$. Let $BB_\LL$ and $A_\LL$ be the Bestvina-Brady subgroup and the right-angled Artin group associated with $\LL$, viewed as discrete subgroups of $G=\aut(X_\LL)$. Then
		$\comme_G(BB_\LL)\cap \comme_G(A_\LL)$ is not discrete.
	\end{theorem}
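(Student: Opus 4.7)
The plan is to realize $(BB_\LL, A_\LL)$ as a special case of the framework of Proposition~\ref{prop-startingpoint}, with $X_\LL$ playing the role of the first CAT(0) factor. Concretely, I need to produce an irreducible cocompact lattice $\Gamma$ acting on $X_\LL \times Y$ for some second CAT(0) cell complex $Y$, a horizontal slice $X_\LL \times \{y_0\}$ whose stabilizer in $\Gamma$ equals $A_\LL$, and a height homomorphism $\phi: \Gamma \to \Z$ whose restriction to $A_\LL$ is the Bestvina-Brady map sending every Artin generator to $1$.

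For the construction of $\Gamma$, I would realize $\Gamma$ as the fundamental group of a graph of groups built from $A_\LL$. The four copies of $\GG$ wedged together in $\LL$ give rise to four canonical subgroups $A_{\GG_1},\dots,A_{\GG_4}\le A_\LL$, all isomorphic to $A_\GG$ and sharing the cyclic subgroup $\langle v\rangle$. A choice of pair-swap identifications, say $A_{\GG_1}\leftrightarrow A_{\GG_2}$ and $A_{\GG_3}\leftrightarrow A_{\GG_4}$, produces two isomorphisms between subgroups of $A_\LL$, which I would promote to two stable letters in an HNN-type presentation. The resulting group $\Gamma$ acts cocompactly on $X_\LL\times Y$, where $Y$ is the Bass-Serre tree of this graph of groups, via the canonical $A_\LL$-action on each vertex slice of $X_\LL$ together with Bass-Serre on the $Y$ factor.

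To match $L:=\stab_\Gamma(X_\LL\times\{y_0\})$ with $A_\LL$ and $K:=\ker(\phi|_L)$ with $BB_\LL$, I would arrange the stable-letter identifications so that conjugation by each stable letter permutes the standard generators of $A_\LL$ according to the chosen pair-swap. The height homomorphism $\phi:\Gamma\to\Z$ is then defined by combining the signed stable-letter count with the Bestvina-Brady height on $A_\LL$; the defining HNN relations are consistent with this definition precisely because the stable letters permute the Artin generators and hence preserve the Bestvina-Brady height. By construction $\phi|_L$ equals the Bestvina-Brady map, so $K=BB_\LL$. Geometric density of $K$ in $\aut(X_\LL)$ follows from the remark after Theorem~\ref{thm:main1}, using that $BB_\LL$ is an infinite normal subgroup of the geometrically dense cocompact lattice $A_\LL$, together with \cite[Theorem 1.10]{caprace-monod1}.

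The main obstacle will be establishing irreducibility of the resulting $\Gamma$ as a lattice on $X_\LL\times Y$, i.e.\ showing that the projection of $\Gamma$ to $\aut(Y)$ has non-discrete image. This is the step where the choice of four copies (rather than two) is essential: with four copies the two commuting pair-swaps, together with non-trivial internal automorphisms of each $\GG_i$ fixing the wedge vertex $v$ and the distinguished vertex $w$, generate enough tree-automorphisms to force non-discreteness, whereas fewer copies would not afford the requisite rank. Once irreducibility is in hand, Proposition~\ref{prop-startingpoint} applies directly and yields the indiscreteness of $\comme_G(BB_\LL)\cap\comme_G(A_\LL)$ in $G=\aut(X_\LL)$.
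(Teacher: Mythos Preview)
Your plan has a genuine gap at the irreducibility step, and it is not a technical obstacle that more bookkeeping will remove; it is structural.

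Whichever reading of your HNN construction one takes, the factor action on $X_\LL$ lands in a discrete group. If your stable letters conjugate the proper subgroups $A_{\GG_1}\!\leftrightarrow\! A_{\GG_2}$ and $A_{\GG_3}\!\leftrightarrow\! A_{\GG_4}$, then the edge groups have infinite index in $A_\LL$, the Bass--Serre tree $Y$ is not locally finite, and the $\Gamma$-action on $X_\LL\times Y$ is not cocompact, so Proposition~\ref{prop-startingpoint} does not even apply. If instead (as your later sentence ``conjugation by each stable letter permutes the standard generators of $A_\LL$'' suggests) each stable letter acts as a global automorphism of $A_\LL$ given by a pair-swap of the copies of $\GG$, then every element of $\Gamma$ acts on $X_\LL$ through the normalizer of $A_\LL$ in $\aut(X_\LL)$. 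That normalizer is $A_\LL$ extended by the finite group of cellular automorphisms of the compact complex $S_\LL$, hence discrete; so Hypothesis~(3) of Proposition~\ref{prop-startingpoint} fails. Throwing in further automorphisms of the individual $\GG_i$ does not help: $\GG$ is an arbitrary finite graph and may have no nontrivial automorphisms, and in any case finitely many symmetries of $\LL$ still only enlarge $A_\LL$ by a finite group inside $\aut(X_\LL)$. In short, no graph-of-groups built over $A_\LL$ with edge identifications coming from symmetries of $\LL$ can have indiscrete image in $\aut(X_\LL)$.

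The paper circumvents this by importing indiscreteness from an external source. It first produces, via Borel--Harder, a residually finite torsion-free \emph{irreducible} lattice $H$ on a product of two $8$-valent trees $T_h\times T_v$, and arranges (Lemma~\ref{lem:tree}) that $H$ preserves orientations, sends standard lines to standard lines, and acts label-preservingly on one horizontal slice. Writing $\LL'=\{w_1,\dots,w_4\}\subset\LL$, one has $T_h=X_{\LL'}$, and the inclusion $\aut(\LL')\hookrightarrow\aut(\LL)$ (permuting the four copies of $\GG$) lets one extend the type cocycle and hence the $H$-action to the canonical completion $Z_\LL\times T_v$ (Lemma~\ref{lem:extend2}). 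Passing to the universal cover gives the group $G$ acting on $X_\LL\times T_v$; indiscreteness of $\Phi(G)$ in $\aut(X_\LL)$ is inherited directly from the indiscreteness of $H\to\aut(T_h)$, which in turn comes from irreducibility of the seed arithmetic lattice. The label-preserving slice guarantees that the horizontal stabilizer is finite index in $A_\LL$ and that the kernel of the induced height map is finite index in $BB_\LL$ (Lemma~\ref{lem:fi}). Only then does Proposition~\ref{prop-startingpoint} apply. The four copies of $\GG$ are needed not to ``generate enough tree-automorphisms'' internally, but so that $\aut(\LL')\cong S_4$ is large enough to receive the type cocycle of the seed lattice on the $8$-valent tree.
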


Combining the above theorem with the computation of finiteness properties of Bestvina-Brady groups in \cite{bb-morse} gives the following, which is in sharp contrast with the case that $K$ is not finitely generated in Theorem~\ref{thm:main1}.  
\begin{cor}
For each $n\ge 1$, there exists a CAT(0) cube complex $X$, a uniform lattice $L$ in $G=\aut(X)$, and an infinite index infinite normal subgroup $K$ of $L$ such that $K$ is of type $F_n$ and 
	$\comme_G(K)\cap \comme_G(L)$ is not discrete in $G$.
\end{cor}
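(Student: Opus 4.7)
My plan is to combine Theorem~\ref{thm-bb-intro} with the Bestvina-Brady finiteness theorem of \cite{bb-morse}, which states that the Bestvina-Brady kernel of a right-angled Artin group is of type $F_n$ if and only if the flag completion of its defining graph is $(n-1)$-connected. Thus it suffices to produce, for each $n$, a finite simplicial graph $\GG$ (with more than one vertex) whose flag complex is $(n-1)$-connected, and then feed it into the construction of Theorem~\ref{thm-bb-intro}.

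Given $n\ge 1$, I would take $\GG$ to be the complete graph on a sufficiently large number of vertices, so that its flag complex is a simplex and hence contractible (in particular $(n-1)$-connected). Fix a base vertex $v\in\GG$ and some vertex $w\neq v$, and form $\LL$ as the wedge of four copies of $\GG$ identified along $v$, exactly as in Theorem~\ref{thm-bb-intro}. Since no edge of $\LL$ joins non-$v$ vertices of distinct copies, every clique of $\LL$ lies inside a single copy of $\GG$, so the flag complex of $\LL$ is precisely the wedge at $v$ of four copies of the flag complex of $\GG$. A wedge of $(n-1)$-connected CW complexes at a common $0$-cell is itself $(n-1)$-connected, so \cite{bb-morse} yields that the Bestvina-Brady group $BB_\LL$ is of type $F_n$.

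To close the argument, set $X:=X_\LL$, $L:=A_\LL$, $K:=BB_\LL$, and $G:=\aut(X)$. The right-angled Artin group $A_\LL$ acts freely and cocompactly on the CAT(0) cube complex $X_\LL$ by deck transformations, so $L$ is a uniform lattice in $G$. The subgroup $BB_\LL$, being the kernel of the surjection $A_\LL\to\Z$ sending each standard generator to $1$, is an infinite normal subgroup of infinite index in $A_\LL$ and is of type $F_n$ by the previous step. Theorem~\ref{thm-bb-intro} then delivers the required indiscreteness of $\comme_G(K)\cap\comme_G(L)$. The only point that needs any attention is the wedge-of-flag-complexes observation, which is immediate; beyond this the corollary is a direct synthesis of the two cited inputs, so no serious obstacle is anticipated.
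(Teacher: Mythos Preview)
Your proposal is correct and follows exactly the approach the paper indicates: the corollary is stated immediately after Theorem~\ref{thm-bb-intro} with the one-line justification ``combining the above theorem with the computation of finiteness properties of Bestvina-Brady groups in \cite{bb-morse},'' and you have simply supplied the details. Your choice of $\GG$ a complete graph is perfectly valid (the theorem only requires $\GG$ to have at least two vertices), and in fact yields a single example with $K$ of type $F_n$ for every $n$ simultaneously, since the flag complex of $\LL$ is then a wedge of simplices and hence contractible.
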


\begin{rmk} We do not know the answer to the following question. Let 
	$\mathcal G$ be a finite  graph  which is not a complete graph.
	Let $G=\aut(X_{\mathcal G})$.  Is
 	$\comme_G(BB_\GG)\cap \comme_G(A_\GG)$  non-discrete in general?
 	
 	  If we  glue several copies of $\GG$ along a distinguished vertex $v\in \GG$ to form a new graph $\mathcal L$, then this gives non-trivial symmetries of $\mathcal L$. These symmetries can be used to produce many new symmetries of $X_{\mathcal L}$. So, $\aut(X_{\mathcal L})$ will be ``much larger'' than $\aut(X_{\mathcal G})$. This increase in the symmetry group is used in  the proof of Theorem~\ref{thm-bb-intro}. The idea of enlarging $\aut(X_{\mathcal G})$ by gluing multiple copies of $\mathcal G$ is originally due to Hughes in a somewhat different context \cite{fibering}.
\end{rmk}

	
	Subsequently, we analyze a set of examples due to  Burger and Mozes \cite[Section 8.1]{bm-divergence}. They give an example of a normal subgroup $K$ of a tree lattice $\Gamma$,
	so that $\Gamma$ is a free group on $d$ generators acting cocompactly on a $2d-$regular tree $T_{2d}$. While Burger and Mozes show that $K$ has dense commensurator in $G=\aut(T_{2d})$, we show the following (see Proposition~\ref{prop-disccomm}).
	
	 \begin{prop}\label{prop-intro2}
		With $G=\aut(T_{2d})$, and $ \Gamma, K_d$ as in the Burger-Mozes example above, the common commensurator $\comme_G(K_d) \cap \comme_G(\Gamma)$ is discrete.
	\end{prop}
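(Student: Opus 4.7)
The plan is to combine the two commensurating conditions on $g \in \comme_G(K_d) \cap \comme_G(\Gamma)$ and extract rigidity that neither condition provides alone. Set $\Gamma_1 = \Gamma \cap g^{-1} \Gamma g$, a finite-index subgroup of $\Gamma$, and let $c_g : \Gamma_1 \to g\Gamma_1 g^{-1}$ denote conjugation by $g$, which is a group isomorphism of finite-index subgroups of the free group $\Gamma = F_d$. A direct computation via Nielsen--Schreier and abelianization shows that $[\Gamma_1, \Gamma_1]$ has infinite index in $K_d \cap \Gamma_1$ as soon as $[\Gamma : \Gamma_1] \geq 2$; consequently, the condition $g \in \comme_G(K_d)$ is a genuine extra constraint on top of $g \in \comme_G(\Gamma)$, demanding that $c_g$ virtually intertwine the two abelianization maps $\Gamma_1 \to \Z^d$ inherited from the containments $\Gamma_1 \subset \Gamma$ and $\Gamma_1 \subset g^{-1}\Gamma g$.

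Translating to covering spaces, write $R_d = \Gamma \backslash T_{2d}$ for the rose with $d$ loops and $Y_d = K_d \backslash T_{2d}$ for the universal abelian cover of $R_d$. The element $g$ then produces a finite common cover $R_1 \to R_d$ with two distinct deck structures, and the $K_d$-commensuration condition says that this common cover lifts to a common finite cover $Y_1 \to Y_d$ with both deck structures compatible with the $\Z^d$-deck group of $Y_d$. The key rigidity step is to show that this compatibility forces $c_g$ to descend to a single element of $\mathrm{GL}_d(\Z)$, acting as an automorphism of $\Z^d = \Gamma/K_d$.

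Once this is established, the common commensurator is parameterized by countably many discrete choices: a finite cover of $R_d$, an element of $\mathrm{GL}_d(\Z)$, and a discrete choice of lift at the tree level. To upgrade countability to discreteness in $G = \aut(T_{2d})$, I would verify directly that a common commensurator sufficiently close to the identity must fix a generating set of $\Gamma$, hence lie in the normalizer of $\Gamma$; since $\Gamma$ acts freely and cocompactly on $T_{2d}$, the intersection of its normalizer with a small compact-open neighborhood of the identity in $G$ is trivial.

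The hard step will be the rigidity claim in the second paragraph: ruling out the indiscrete elements of $\comme_G(K_d)$ constructed by Burger--Mozes, which exist precisely because $Y_d$ carries a large group of local symmetries \emph{not} compatible with the $\Z^d$-deck action. The argument must leverage the $\Gamma$-commensurating condition to force any candidate $g$ to respect the abelian-cover structure globally, not just combinatorially; this is where the freeness and cocompactness of the $\Gamma$-action on $T_{2d}$ play the essential role.
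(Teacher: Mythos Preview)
Your outline has the right shape, but two of the load-bearing steps are not arguments; they are restatements of what must be proved.

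First, the ``rigidity claim'' that $c_g$ descends to an element of $\mathrm{GL}_d(\Z)$ on $\Gamma/K_d$ is essentially the content of the Koberda--Mj theorem on preservation of $Q$-lines with holes (Theorem~\ref{thm:comm-nonab}), which the paper invokes as a black box. You acknowledge that this is the hard step and leave it unproven; without it the rest does not start. Note that the paper does not quite prove what you state either: it only obtains that conjugates $K_d^g$ preserve $Q$-lines with holes, a weaker cocycle-type statement, and then runs a separate local argument.

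Second, and more seriously, the passage ``a common commensurator sufficiently close to the identity must fix a generating set of $\Gamma$, hence lie in the normalizer of $\Gamma$'' is false as written. Proximity to the identity in $\aut(T_{2d})$ means fixing a large ball pointwise; it does \emph{not} mean conjugating the generators of $\Gamma$ to themselves as group elements, nor normalizing $\Gamma$. An element of $G$ fixing the $1$-ball around the basepoint can still act wildly farther out. This is exactly the gap the paper fills with a genuine inductive argument: after the reduction of Lemma~\ref{lem-suffdiscrete} (which in turn uses that $N_G(K_d)$ is discrete because $\aut$ of the standard Cayley graph of $\Z^d$ is discrete), one takes $g$ close to the identity fixing a vertex $\beta$, uses the $Q$-line preservation to force $g$ to fix each edge at $\beta$, and then propagates outward by induction on distance. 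Your countability remark does not substitute for this, since countable subgroups of $\aut(T_{2d})$ need not be discrete.

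In short, your covering-space reformulation and the $\mathrm{GL}_d(\Z)$ heuristic are compatible with the paper's viewpoint, but the two concrete mechanisms the paper uses---the reduction via Lemma~\ref{lem-suffdiscrete} to a line-preserving subgroup, and the edge-by-edge propagation of fixed points---are absent from your proposal and are precisely where the work lies.
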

	Thus, the Burger-Mozes examples end up providing a counterexample (in the  general context of automorphism groups of trees) to
	a Theorem due to Borel (see Theorem \ref{commens=commesn}) that says that
	the commensurator of a Zariski dense subgroup of an arithmetic lattice is contained in the commensurator of  the ambient lattice.

The Burger-Mozes examples in \cite[Section 8.1]{bm-divergence} do in fact provide instances where $\comme_G(K)$ is indiscrete and further $[\comme_G(K): N_G(K)]$ is infinite, where $N_G(K)$ denotes the normalizer of $K$ in $G$. Our focus in this paper is to bring out a subtler contrast with the arithmetic situation beyond that covered by Borel's theorem \cite{borel}. Hence we work the conclusion of Borel's theorem into the hypothesis and look at the common commensurator 
$\comme_G(K)\cap\comme_G(\Lambda)$.\\


\noindent {\bf Commensurator versus normalizer:} We conclude this introductory section with a further comment about the  way Question~\ref{qn-shalom1} is formulated. 
Apart from Borel's theorem \cite{borel},	a further reason for looking at the common commensurator $\comme_G(K)\cap\comme_G(\Lambda)$ instead of simply the  commensurator $\comme_G(K)$ in Question~\ref{qn-shalom1} is as follows. 

Again, take $G = \aut(T)$ as an example.
	An uninteresting way for $K$ to have an indiscrete commensurator in $G$ is  that $K$
	(or some finite index subgroup of $K$) already has
	an indiscrete normalizer $N_G(K)$ in $G=\aut(T)$. In the special case that $K < \Lambda$ is a normal subgroup, 
	this is equivalent to saying that the automorphism group of the graph $T/K$ itself
	is indiscrete. Such examples are relatively easy to find, e.g.\ when $T/K$
	itself is a Cayley-Abels graph of an indiscrete tdlc group.
	The examples in \cite{bm-divergence} as well as those described in this
	paper are fundamentally different from this class of  examples.

\section{Complete Square complexes and friends}\label{sec-csc}
\subsection{Complete square complexes and graph of graphs}\label{subsec-csc}
\begin{defn}\label{def-csc}\cite{wise-csc} A \emph{complete square complex (CSC)} is a non-positively curved square complex such that the 
link of every vertex is a complete bipartite graph $K_{m,n}$.
\end{defn}
It was shown by Wise in \cite{wise-csc} that a CAT(0) square complex is a CSC if and only if its universal cover is a product of trees $T_h \times T_v$.

One way to construct a CSC, discussed in \cite{wise-csc}, is the following. Let $\GG$ be a finite graph. Let $X$ be a graph of spaces with  underlying graph $\GG$, such that 
\begin{enumerate}
\item edge  and vertex space are finite graphs,
\item any boundary morphism from an edge space to a vertex space is a covering map of finite degree.
\end{enumerate} 
 Then $X$ is a CSC \cite{wise-csc}. 

Let $\Gamma=\pi_1(X)$. Then the universal cover of $X$ is a product of two trees $T_h\times T_v$, and $\Gamma$ is a cocompact lattice in  $\aut(T_h) \times \aut(T_v)$. 


Conversely, given a group $\Gamma$ acting on $T_h\times T_v$ freely without exchanging the two factors, the quotient CSC, denoted $X$, admits  two graph of graphs structures with boundary morphisms being covering maps of finite degree.  After subdividing edges of $T_h$ and $T_v$ if necessary, we may assume in addition that the factor actions $\Gamma\acts T_h$ and $\Gamma\acts T_v$ do not have edge inversions.
The quotient map
$\Pi: T_h  \times T_v  \to X$ sends each $\{x\} \times T_v$ to a \emph{vertical fiber}
of $X$ through ${\Pi(x)}$ and each $T_h\times \{y\}$ to a \emph{horizontal fiber}
of $X$ through ${\Pi(y)}$. The quotient space of $X$ obtained by quotienting each
vertical fiber (resp.\ horizontal fiber) to a point gives a \emph{horizontal 
	base graph $\HH$} (resp.\  a \emph{vertical 
	base graph $\VV$}). Note that $\HH = T_h/\Gamma$ and  $\VV = T_v/\Gamma$.
Then we have natural maps $P_h: X \to \HH$ and 
$P_v: X \to \VV$, where 
\begin{enumerate}
	\item the pre-image of each point $z$ (in $\HH$ or $\VV$) under  $P_h: X \to \HH$ or
	$P_v: X \to \VV$ is a graph, denoted
	$\FF_z$. We shall refer to   $\FF_z$ as the \emph{fiber over $z$}. If $z \in \HH$,
	(resp.\ $z \in \VV$) 
	$\FF_z$ is a vertical (resp.\ horizontal) fiber.
	\item over each open edge $e$  (in $\HH$), the bundle $P_h: P_h^{-1}(e) \to e$
	is trivial, so that we may write $P_h^{-1}(e) = \FF_e \times e$. Here, $ \FF_e $ is the
	fiber over any point in $e$. A similar description holds for $\VV$.
	\item If $z$ is a vertex of the graph $\HH$ and is a boundary point of $e$, then 
	$\FF_e$ is a finite-sheeted cover of $\FF_z$. A similar description holds for 
	vertices of $\VV$.
\end{enumerate}
There is a dual point of view that regards vertical (resp.\ horizontal) fibers as \emph{multi-sections} of 
$P_v: X \to \VV$   (resp.\ $P_h: X \to \HH$). 

\subsection{More properties of vertical fibers and horizontal fibers}

We will work in a slightly more general setting in this subsection.
 Let $\KK_v, \KK_h$ be piecewise Euclidean CAT(0) polyhedral complexes with finite shape (in the sense of \cite[Chapter I.7]{brid-h}), i.e.\ there are finitely many isometry types of cells.
Further
 suppose that $\Gamma$ acts freely and cocompactly by isometries 
on $\KK_h \times \KK_v$ such that
\begin{enumerate}
	\item $\Gamma$ does not exchange the two factors;
	\item the factor actions $\Gamma\acts \KK_h$ and $\Gamma\acts \KK_h$ have no inversions, i.e. whenever a closed cell is fixed setwise, then it is fixed pointwise.
\end{enumerate}
Let $X = (\KK_h \times \KK_v)/\Gamma$. Then the  discussion in Section~\ref{subsec-csc}  generalizes in a straightforward way to give the following. There exists a horizontal
(resp.\ vertical) base	polyhedron complex $\HH$ (resp.\ $\VV$) and a map
$P_h: X \to \HH$ (resp.\ 
$P_v: X \to \VV$) such that
\begin{enumerate}
	\item The vertical fibers $P_h^{-1}(z)$, $z \in \HH$ (resp.\ 
	horizontal fibers $P_h^{-1}(z)$, $z \in \VV$) are polyhedral complexes.
	\item Each vertical fiber $P_h^{-1}(z)$, $z \in \HH$ is a compact
	quotient of $\KK_v \times \{w\}$ for some $w \in \KK_h$ by the stabilizer
	(in $\Gamma$) of $\KK_v \times \{w\}$. Similarly, 
	each horizontal fiber $P_v^{-1}(z)$, $z \in \VV$ is a compact
	quotient of $\{w\}\times \KK_h  $ for some $w \in \KK_v$ by the stabilizer
	(in $\Gamma$) of $\{w\}\times \KK_h  $.
	\item Each vertical fiber $P_h^{-1}(z)$ is the unique \emph{multi-section} of 
	$P_v$ through any $x \in P_h^{-1}(z)$. Similarly, 
	each horizontal fiber $P_v^{-1}(z)$ is the unique \emph{multi-section} of 
	$P_h$ through any $x \in P_v^{-1}(z)$.
	\item $\HH = \KK_h/\Gamma$ and  $\VV = \KK_v/\Gamma$.
\end{enumerate} 

Note that there are natural maps $P_h^{-1}(z)\to \mathcal V$ and $P_v^{-1}(z)\to \mathcal H$. The  inverse images of points under these maps have finite cardinality. That is why we call them multi-sections over $\mathcal V$ or $\mathcal H$.

\begin{rmk}\label{rmk-salvxtree}
Later	in the paper, we shall be particularly concerned with the case that $\KK_h$ is the
	universal cover of a Salvetti complex and $\KK_v$ is a tree.
\end{rmk}

\section{A criterion for non-discrete commensurators and first examples}

\subsection{The criterion}
Much of this section and the next is devoted to guaranteeing the hypotheses of Proposition~\ref{prop-startingpoint} below, which gives a criterion for nondiscrete commensurators. We therefore spell this out explicitly.

Recall that $\Gamma$ is  \emph{irreducible as an abstract group} if $\Gamma$ does not have a finite index subgroup $\Gamma'$ admitting a splitting $\Gamma'=\Gamma_1\times \Gamma_2$ with $\Gamma_i$ being infinite for $i=1,2$. 

\medskip

\noindent {\bf Hypotheses and notation for Proposition~\ref{prop-startingpoint}:}
\begin{enumerate}
	\item $\KK_h$ and $\KK_v$ are finite dimensional 
	proper CAT(0) polyhedral complexes with finite shape.
	\item $\Gamma$ acts freely, cocompactly and properly discontinuously by isometries on $\KK_h \times \KK_v$ without exchanging the two factors. Let
	$X = (\KK_h \times \KK_v)/\Gamma$.
	\item The homomorphism $\Gamma\to \aut(\KK_h)$ induced by the factor action $\Gamma\acts \KK_h$ has indiscrete image in $\aut(\KK_h)$. This holds, for instance, when $\Gamma$ is irreducible as an abstract group \cite{caprace-monod2}.
	\item The fundamental group of $\HH = \KK_h/\Gamma$ admits a surjective homomorphism $\beta:\pi_1(\HH)\to L$ onto some infinite group $L$. Let $\HH_h$ denotes the cover of $\HH$ corresponding to $\ker \beta$.
	\item $\phi : \Gamma \to L$ be the composition of $\Gamma\to \pi_1(\HH)$ (induced by the projection map $X\to \HH$) and $\beta$. Let $\ker \phi=\Gamma_0$ and $Y= (\KK_h \times \KK_v)/\Gamma_0$ denote the cover of $X$
	corresponding to $\Gamma_0$.
	\item $M$ denotes a horizontal fiber passing through a vertex of $X$.
\end{enumerate}

Note that $\pi_1 M$ acts by deck transformations on the universal cover
$\mathcal K_h$ of $M$. Thus we will view $\pi_1 M$ as a subgroup of $\aut(\mathcal K_h)$. Then $\pi_1 M$ is a cocompact lattice in $\aut(\mathcal K_h)$. On the other hand, the inclusion map $M\to X$ is $\pi_1$-injective. So we also view $\pi_1 M$ as a subgroup of $\Gamma$.

\begin{prop}\label{prop-startingpoint} With notation and hypotheses as above,
	let $K=\ker (\phi|_{\pi_1 M})$ denote the kernel of $\phi:\Gamma\to L$ restricted to $\pi_1 M\le \Gamma$. Now we view $K\le \pi_1 M$ as subgroups of $G=\aut(\KK_h)$. Then 
	\begin{enumerate}
		\item $K$ is of infinite index in $\pi_1 M$.
		\item 	$\comme_G(K)\cap \comme_G(\pi_1 M)$ is indiscrete in $G$, in particular, $K$ has indiscrete commensurator in $\aut(\KK_h)$.
		\item  If $L\cong \mathbb Z$, then $\comme_G(K)$ does not have a finite index subgroup which normalizes some finite index subgroup of $K$.
	\end{enumerate}
\end{prop}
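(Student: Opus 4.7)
For \textbf{(1)}, it suffices to show that $\phi(\pi_1 M) \subseteq L$ is infinite. Since $M$ is a compact multi-section mapping finite-to-one onto $\HH$ (Section~\ref{sec-csc}), the induced homomorphism $\pi_1 M \to \pi_1(\HH)$ has finite-index image; composing with the surjection $\beta$ yields a finite-index subgroup of the infinite group $L$, which is in particular infinite, so $K = \ker(\phi|_{\pi_1 M})$ has infinite index in $\pi_1 M$.

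For \textbf{(2)}, writing $\rho_h : \Gamma \to \aut(\KK_h)$ for the factor action (so $\bar\Gamma := \rho_h(\Gamma)$ is indiscrete in $G$ by hypothesis~(3)), the plan is to show $\bar\Gamma \subseteq \comme_G(K) \cap \comme_G(\pi_1 M)$; indiscreteness then follows immediately. For any $\gamma \in \Gamma$, the conjugate $\gamma \pi_1 M \gamma^{-1}$ is the $\Gamma$-stabilizer of the slice $\KK_h \times \{\gamma \cdot w\}$, so $\pi_1 M \cap \gamma \pi_1 M \gamma^{-1}$ is the $\pi_1 M$-fixer of the point $\gamma \cdot w \in \KK_v$; the multi-section structure of Section~\ref{sec-csc}, applied to nearby horizontal slices, ensures this intersection has finite index in both $\pi_1 M$ and $\gamma \pi_1 M \gamma^{-1}$. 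Applying $\rho_h$, which is injective on each horizontal fiber stabilizer by freeness of the $\Gamma$-action, places $\rho_h(\gamma)$ in $\comme_G(\pi_1 M)$. For $K$: normality of $\Gamma_0$ in $\Gamma$ gives $\gamma K \gamma^{-1} = \Gamma_0 \cap \gamma \pi_1 M \gamma^{-1}$, and intersecting the finite-index inclusion above with $\Gamma_0$ preserves finite index, so $\rho_h(\gamma) \in \comme_G(K)$ as well.

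For \textbf{(3)}, suppose toward contradiction that some finite-index $H \le \comme_G(K)$ normalizes a finite-index $K' \le K$. I plan to derive the contradiction in two steps. First, $H$ is indiscrete: by Part~(2), $\comme_G(K)$ is indiscrete, and a pigeonhole argument — distinct $a_n \to e$ in $\comme_G(K)$ concentrate in some $H$-coset, and the ratios $a_n a_m^{-1}$ give distinct elements of $H$ accumulating at $e$ — transfers indiscreteness to $H$. Second, I would show $N_G(K')$ is discrete: since $K'$ acts freely on $\KK_h$, there is a natural isomorphism $N_G(K')/K' \cong \aut(\tilde M')$ with $\tilde M' := \KK_h/K'$, a cover of $M$ whose deck group $\pi_1 M/K'$ is virtually $\Z$ (a finite extension of a finite-index subgroup of $L \cong \Z$). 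Such a virtually-$\Z$ cover admits a compact fundamental domain $F$; augmenting $F$ by a cell on only one of the two ends of the $\Z$-axis and by enough extra cells to kill the finitely many internal symmetries of $F$ produces a compact subset whose pointwise stabilizer in $\aut(\tilde M')$ is trivial. This isolates the identity in the compact-open topology, so $\aut(\tilde M')$, and hence $N_G(K')$, is discrete — contradicting $H \subseteq N_G(K')$ being indiscrete.

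The main obstacle is this discreteness of $\aut(\tilde M')$, where the hypothesis $L \cong \Z$ enters essentially: the two-ended structure of a virtually-$\Z$ cover is what allows one to break all symmetries by an asymmetric compact augmentation of the fundamental domain. For higher-rank $L$ the space of ends of $\tilde M'$ can be infinite (even uncountable, e.g.\ when $L$ is a free group and $\tilde M'$ becomes tree-like at infinity), and $\aut(\tilde M')$ may genuinely contain non-discrete symmetries, consistent with the proposition's restriction to the $\Z$-case. Parts~(1) and~(2) are otherwise formal consequences of the multi-section machinery of Section~\ref{sec-csc} combined with hypothesis~(3).
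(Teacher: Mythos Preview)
Your arguments for (1) and (2) are essentially correct. For (2) you actually prove slightly more than the paper: by writing $\gamma K\gamma^{-1}=\Gamma_0\cap\gamma(\pi_1 M)\gamma^{-1}$ (using normality of $\Gamma_0$) and intersecting the finite-index inclusion for $\pi_1 M$ with $\Gamma_0$, you show that \emph{all} of $\rho_h(\Gamma)$ lies in the common commensurator, whereas the paper only shows this for the finite-index subgroup $\Phi(\Gamma')$ with $\Gamma'=\langle\pi_1 M,\Gamma_0\rangle$. For (1) your direct approach is fine in spirit, but ``finite-to-one multi-section'' does not by itself force finite-index image on $\pi_1$; one needs the extra observation (implicit in the paper's contradiction argument) that $\pi_1 M$ together with the normal closure of the $\KK_h$-vertex stabilizers acts cocompactly on $\KK_h\times\KK_v$.

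Part (3), however, has a genuine gap: the claim that $\aut(\tilde M')$ is discrete is false in general, and your augmentation argument does not prove it. Take for instance the graph obtained from $\Z$ by attaching an identical copy of a finite graph $G_0$ with nontrivial automorphism group $A$ at every vertex; this is locally finite, of bounded valence, and quasi-isometric to $\Z$, yet its automorphism group contains the compact non-discrete group $\prod_{n\in\Z}A$. No compact subset has trivial pointwise stabilizer here, because automorphisms can always permute the decorations attached at vertices outside the chosen compact set. The quotient $\tilde M'=\KK_h/K'$ can genuinely look like this.

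The paper's proof of (3) avoids this trap by not attempting to bound $N_G(K')$ abstractly. Instead it pulls the hypothesis back to a finite-index $\Gamma'_0\le\Gamma_0$ normalizing $K'$, passes to the kernel $K''$ of $\Gamma'_0\acts\KK_v$, and then exploits the \emph{finitely generated} vertex stabilizers $N_{x_0}$ of the other factor action $\Gamma'_0\acts\KK_h$. These act on $\KK_h/K''$ fixing a vertex, and Lemma~\ref{lem:finite} (which is precisely the statement that a finitely generated subgroup of the automorphism group of a bounded-valence $\Z$-like graph fixing a vertex is finite) forces this action to be finite. From there a fiber-bundle and monodromy argument produces a virtual product splitting of $\Gamma$, contradicting hypothesis~(3). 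The key point you are missing is that finiteness can only be extracted for \emph{finitely generated} subgroups fixing a vertex, and one must use the product structure on $\KK_h\times\KK_v$ to manufacture such subgroups.
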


\begin{proof}
	For Assertion (1), we first prove the claim that  $\phi$ is surjective. Indeed, it suffices to show that $X\to \HH$ is $\pi_1$-surjective. Consider the factor action of $\Gamma$ on $\KK_h$ and a basepoint $x\in \KK_h$. Each $\gamma\in \Gamma$ gives a loop in $\HH$ as follows.  
Let $\alpha$ be a path in $\KK_h$ from $x$ to $\gamma x$.
Then the	image of $\alpha$
 under the quotient map $\KK_h\to \KK_h/\Gamma=\HH$ is a loop in $\HH$. As $\KK_h$ is simply-connected, the homotopy class of the resulting loop does not depend on the choice of the path
 $\alpha$. This gives a homomorphism $\Gamma\to \pi_1(\HH)$ which is an alternative description of the induced map on $\pi_1$ for 
 the projection $X\to \HH$. Since the map $\KK_h\to \KK_h/\Gamma=\HH$ has 
 the path lifting property,  $\Gamma\to \pi_1(\HH)$ is surjective.
	
	Let $N$ be a vertex stabilizer of $\Gamma\acts \KK_h$. Then $N\subset \ker \phi$, and there is an $N$-invariant vertical slice of $\KK_h\times \KK_v$ where $N$ acts cocompactly. As $\pi_1 M$ can be viewed as a vertex stabilizer of $\Gamma\acts \KK_v$, there is a $\pi_1 M$-invariant horizontal slice of $\KK_h\times \KK_v$ where $\pi_1 M$ acts cocompactly. If $K$ is of finite index in $\pi_1 M$, then $K$ acts cocompactly on the same horizontal slice. Then $\ker \phi$ contains the subgroup generated by $K$ and $N$, and this subgroup acts cocompactly on $\KK_h\times \KK_v$. Thus $\ker\phi$ is finite index in $\Gamma$, which contradicts the previous paragraph and
	the hypothesis that $L$ is infinite.\\
	
	For Assertion (2), consider the factor action $\Gamma\acts \KK_v$.
	 Then there is a vertex $x\in \KK_v$, such that $\pi_1 M$ is the stabilizer of $x$. As $\Gamma$ acts geometrically on $\KK_h\times \KK_v$, we know $\KK_v$ is locally finite. By considering the action of $\Gamma$ on the 1-skeleton of $\KK_v$ and applying Lemma~\ref{lem:commensurated} below, we know that $\pi_1 M$ is commensurated by $G$. 
	On the other hand, if we restrict the action $\Gamma\acts \KK_v$ to $\Gamma_0\acts \KK_v$, then $K$ is the $\Gamma_0$-stabilizer of $x$. Thus $K$ is commensurated by $\Gamma_0$ by Lemma~\ref{lem:commensurated} again. Let $\Gamma'$ be the subgroup of $\Gamma$ generated by $\pi_1 M$ and $\Gamma_0$. As $K$ is normal in $\pi_1 M$, $K$ is commensurated by $\pi_1 M$. Thus $K$ is commensurated by $\Gamma'$.
	
	Let $\Phi:\Gamma\to \aut(\KK_h)$ be the homomorphism induced by the factor action. Then $\Phi_{|\pi_1 M}$ is injective. As $\Gamma'$ commensurates both $\pi_1 M$ and $K$, we know 
	that $\Phi(\Gamma')$ commensurates both $\Phi(\pi_1 M)$ and $\Phi(K)$.
	Note that $\Gamma'$ acts cocompactly on $\KK_h\times \KK_v$ as the factor action $\pi_1 M\acts \KK_h$ is cocompact and the factor action $\Gamma_0\acts \KK_v$ is cocompact. Thus $\Gamma'$ is of finite index in $\Gamma$. It follows that $\Phi(\Gamma')$ is of finite index in $\Phi(\Gamma)$. Since $\Phi(\Gamma)$ is not discrete by assumption, neither is $\Phi(\Gamma')$. The second assertion follows.\\
	
	For Assertion (3), we argue by contradiction. Suppose $\Gamma_0$ has a finite index subgroup $\Gamma'_0$ such that $\Gamma'_0$ normalizes a finite index subgroup $K'$ of $K$. Consider the factor action $\Gamma'_0\acts \KK_v$ and let $K''$ be the kernel of this action.
	As $K$ is a vertex stabilizer for the factor action $\Gamma_0\acts \KK_v$, our assumption implies that $K''$ is of finite index in each vertex stabilizer of the action $\Gamma'_0\acts \KK_v$.  Let $N_{x_0}$ be the stabilizer of a vertex $x_0\in \KK_h$ with respect to the other factor action $\Gamma'_0\acts \KK_h$. For $g\in N_{x_0}$, let $\bar g\in \aut(\KK_h)$ be the action of $g$ on the horizontal factor. As $K''$ is normal in $\Gamma'_0$, we know
	that $\bar g K''(\bar g)^{-1}=K''$ in $\aut(\KK/h)$. Thus $\bar g$ descends to a cellular automorphism of the orbit space $\KK_h/K''$ fixing $\bar x_0\in \KK_h/K''$, where $\bar x_0$ is the image of  $x_0$ under $\KK_h\to \KK_h/K''$. This gives a group automorphism $\Theta:N_{x_0}\to \aut(\KK_h/K'',\bar x_0)$. 
	
 The assumption $L\cong \mathbb Z$ and Assertion (1) implies that $\KK_h/K$ is an infinite cyclic cover of $M$. Thus $\KK_h/K$ is quasi-isometric to $\mathbb Z$.	The previous paragraph implies that $K''$ is finite index in $K$. Thus $\KK_h/K''$ is quasi-isometric to $\mathbb Z$. As $N_{x_0}$ is finitely generated, Lemma~\ref{lem:finite} below implies that $\im \Theta$ is finite. Let $N'_{x_0}=\ker \Theta$, and let $\Gamma''_0$ be the semi-direct product of $K''$ and $N'_{x_0}$. Then $(\KK_h\times \KK_v)/\Gamma''_0$ has the structure of a fiber bundle, with base being $\KK_h/K''$ and fiber being $\KK_v/N'_{x_0}$. As $N'_{x_0}$ is of finite index in $N_{x_0}$, the fibers are compact. As the monodromy of this fiber bundle respects the cell-structure of $\KK_v/N'_{x_0}$, we know that the monodromy representation $\pi_1(\KK_h)/K''\to \aut(\KK_v/N'_{x_0})$ has finite image as $\aut(\KK_v/N'_{x_0})$ is finite. Thus $(\KK_h\times \KK_v)/\Gamma''_0$ has a finite cover which is a product. Hence $N'_{x_0}$ has a finite index subgroup acting trivially on $\KK_h$, contradicting Hypotheses (3) of Proposition~\ref{prop-startingpoint}.
\end{proof}

The proof of Proposition~\ref{prop-startingpoint} used Lemmas~\ref{lem:finite}
and \ref{lem:commensurated} below. We now provide the proofs of these statements. 


\begin{lemma}
	\label{lem:finite}
Let $\mathcal{G}$ be a  locally finite graph with an upper bound on valence admitting
 a  proper cocompact $ \Z-$action by graph automorphisms. Let $\Gamma$ be a finitely generated group in the automorphism group of $\mathcal {G}$. Suppose $\Gamma$ fixes a vertex of $\mathcal G$. Then $\Gamma$ is finite.
\end{lemma}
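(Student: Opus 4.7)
The plan is to exhibit $\Gamma$ as a subgroup of a finite product of finite permutation groups of uniformly bounded size. Two ingredients drive the argument: a uniform upper bound on the cardinalities of the distance spheres $S_n(v) = \{w \in \mathcal G : d(v,w) = n\}$, together with the classical fact that a finitely generated group has only finitely many subgroups of any given bounded index.

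First I would establish the uniform sphere bound. Because $\Z$ acts properly and cocompactly by isometries on $\mathcal G$, the Schwarz--Milnor lemma gives that $\mathcal G$ is quasi-isometric to $\Z$. Fixing a generator $t$ of $\Z$ and setting $v_k = t^k v$, cocompactness produces a constant $R_0$ such that every vertex of $\mathcal G$ lies within distance $R_0$ of some $v_k$, while the quasi-isometry to $\Z$ forces $d(v, v_k) \asymp |k|$. Hence the number of indices $k$ with $d(v, v_k) \in [n - R_0, n + R_0]$ is bounded independently of $n$, and the valence bound makes each ball $B_{R_0}(v_k)$ of uniformly bounded cardinality. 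Together these yield a constant $M$ with $|S_n(v)| \le M$ for every $n$.

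Since $\Gamma$ fixes $v$ and acts on $\mathcal G$ by graph automorphisms, it permutes each sphere $S_n(v)$, giving a homomorphism $\rho_n : \Gamma \to \mathrm{Sym}(S_n(v))$ whose target has order at most $M!$. Each normal subgroup $N_n = \ker \rho_n$ therefore has index at most $M!$ in $\Gamma$, and since $\Gamma$ is finitely generated, counting homomorphisms into the symmetric group $S_{M!}$ shows that there are only finitely many distinct subgroups among the $N_n$, say $N_{n_1}, \dots, N_{n_r}$. On the other hand, because $\mathcal G$ is connected and $\Gamma \le \aut(\mathcal G)$ acts faithfully on its vertex set, any element of $\bigcap_{n} N_n$ fixes every vertex of $\mathcal G$ and is therefore trivial; combined with the finiteness of the list of distinct $N_n$ this gives $N_{n_1} \cap \dots \cap N_{n_r} = \{1\}$. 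The diagonal map $\Gamma \hookrightarrow \prod_{i=1}^r \Gamma/N_{n_i}$ then embeds $\Gamma$ into a finite product of finite groups, so $\Gamma$ is finite. The step I expect to require the most care is the uniform sphere bound; the bounded-valence hypothesis is essential there, since without it a single $v_k$ could support arbitrarily many nearby vertices in $\mathcal G$ and the spheres could fail to be uniformly bounded.
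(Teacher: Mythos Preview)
Your overall architecture matches the paper's exactly: partition the vertex set into uniformly bounded $\Gamma$-invariant pieces, obtain homomorphisms $\Gamma\to G_n$ into symmetric groups of bounded order, use finite generation to reduce to finitely many distinct kernels, and intersect. The difference is only in \emph{which} partition is used. You take spheres $S_n(v)$, which are automatically $\Gamma$-invariant because $\Gamma$ fixes $v$; the paper instead produces a bijective quasi-isometry $q:V\to\Z$, invokes \cite[Proposition~1.13]{MR3761106} to straighten the conjugated action to an isometric action $\rho_1$ on $\Z$ via an equivariant quasi-isometry $f:\Z\to\Z$, and then uses the fibres $q^{-1}f^{-1}(n)$ as the invariant pieces. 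The fibre bound is immediate there because $f$ is a quasi-isometry, so $|f^{-1}(n)|$ is bounded by the quasi-isometry constants.

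Your route is more elementary in spirit, but the sphere bound as you argue it has a genuine gap. From $d(v,v_k)\asymp|k|$, i.e.\ $c_1|k|\le d(v,v_k)\le c_2|k|$, you only get that the set $\{k:\,d(v,v_k)\in[n-R_0,n+R_0]\}$ lies in $\{k:\,(n-R_0)/c_2\le |k|\le (n+R_0)/c_1\}$, an interval whose length grows like $n(1/c_1-1/c_2)$ whenever $c_1<c_2$. So the inference ``$\asymp$ implies uniformly bounded annuli'' is a non-sequitur. What you actually need is the sharper statement $d(v,v_k)=\tau|k|+O(1)$ for $\tau=\lim_k d(v,t^kv)/k$; subadditivity alone (Fekete) gives only $d(v,v_k)\ge\tau|k|$, and a bounded upper defect does not follow from the quasi-isometry constants. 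You correctly flag this step as the delicate one, but the argument written does not close it. The paper's use of an equivariant quasi-isometry to $\Z$ is precisely what sidesteps this issue: boundedness of the pieces comes for free from coarse injectivity of $f$, at the cost of citing an external straightening lemma.
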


\begin{proof}
After a subdivision of $\mathcal G$ if necessary, we can assume  that
the $\Z-$action on $\mathcal G$ is simplicial. Let $V$ be the vertex set of $\mathcal G$. As $\mathcal G$ is quasi-isometric to $\Z$, we can find a bijective quasi-isometry $q:V\to \mathbb Z$. Then $q$ conjugates the isometric action $\aut(\mathcal G)\acts V$ to an action $\aut(\mathcal G)\stackrel{\rho_0}{\acts} \mathbb Z$  by $(L,A)$-quasi-isometries for some $L\geq 1$ and $A\geq 0$. By \cite[Proposition 1.13]{MR3761106}, there is an isometric action $\aut(\mathcal G)\stackrel{\rho_1}{\acts}\mathbb Z$ and a surjective $\aut(\mathcal G)$-equivariant $(L',A')$-quasi-isometry $f:\Z\to \Z$. Since $\rho_1:\Gamma\acts \mathbb Z$ is an isometric action and the $\Gamma-$action on $\GG$ has a fixed point, then, after passing to a finite index subgroup if necessary, we can assume that the action $\Gamma\stackrel{\rho_1}{\acts}\mathbb Z$ is trivial. For $n\in \mathbb Z$, let $G_n$ be the permutation group of $f^{-1}(n)$. Then we have an injective homomorphism $\Gamma\to \prod_{n\in \mathbb Z} G_n$. As there is a uniform upper bound for the cardinality of $f^{-1}(n)$, we know there are only finitely many isomorphism types of $G_n$. As $\Gamma$ is finitely generated, for each $G_n$, there are only finitely many homomorphisms from $\Gamma$ to $G_n$. Thus the kernel of $\Gamma\to \prod_{n\in \mathbb Z} G_n$ is the intersection of finitely many finite index subgroups of $\Gamma$. As $\Gamma\to \prod_{n\in \mathbb Z} G_n$ is injective, we know $\Gamma$ is finite.
\end{proof}	

The following is well-known, we provide a proof for the convenience of the reader.

\begin{lemma}
	\label{lem:commensurated}
	Suppose $G$ is a group acting on a locally finite connected graph $\GG$ by automorphisms. Then each vertex stabilizer of $G$ is commensurated by $G$.
\end{lemma}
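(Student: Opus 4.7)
The plan is to unpack what commensuration of a vertex stabilizer means in terms of orbits and then use local finiteness plus connectedness to bound these orbits.

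Fix a vertex $v$ of $\GG$ and let $H = \stab_G(v)$. For an arbitrary $g \in G$, observe that $gHg^{-1} = \stab_G(gv)$, so
\[
H \cap gHg^{-1} = \stab_G(v) \cap \stab_G(gv),
\]
i.e. the pointwise stabilizer of the two-element set $\{v, gv\}$. Viewing this as the $H$-stabilizer of the vertex $gv$, the orbit-stabilizer theorem gives $[H : H \cap gHg^{-1}] = |H \cdot gv|$, and symmetrically, viewing it as the $gHg^{-1}$-stabilizer of $v$, it gives $[gHg^{-1} : H \cap gHg^{-1}] = |gHg^{-1} \cdot v|$. The claim thus reduces to showing both of these orbits are finite.

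For the first orbit: $H$ fixes $v$ and acts by graph automorphisms, so it preserves the combinatorial distance $d_{\GG}(v, \cdot)$. Hence $H \cdot gv$ is contained in the sphere of radius $d_{\GG}(v, gv)$ centered at $v$. Since $\GG$ is connected and locally finite, every such sphere (and ball) is finite, so $|H \cdot gv| < \infty$. The same argument with the roles of $v$ and $gv$ swapped (and $H$ replaced by $gHg^{-1}$) shows $|gHg^{-1} \cdot v| < \infty$.

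This proves that for every $g \in G$ the intersection $H \cap gHg^{-1}$ has finite index in both $H$ and $gHg^{-1}$, so $H$ is commensurated by $G$. No step is really an obstacle here; the only subtle point is recognizing that local finiteness together with connectedness (giving finite spheres around any vertex) is precisely what one needs to bound the relevant orbits.
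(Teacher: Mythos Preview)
Your proof is correct. It takes a somewhat different route from the paper's. The paper argues locally along edges: for an edge $e$ with endpoint $v$, the edge stabilizer $\stab_G(e)$ sits inside $\stab_G(v)$ with finite index (by local finiteness), so stabilizers of adjacent vertices are commensurable; connectedness then propagates this along any path, and one finishes by noting $g\stab_G(v)g^{-1}=\stab_G(gv)$. Your argument is instead global and quantitative: you identify the index $[\stab_G(v):\stab_G(v)\cap\stab_G(gv)]$ with the size of the $\stab_G(v)$-orbit of $gv$ via orbit--stabilizer, and bound that orbit by the sphere of radius $d_\GG(v,gv)$, which is finite in a locally finite connected graph. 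Your approach has the minor advantage of giving an explicit bound on the index in terms of sphere sizes; the paper's edge-by-edge argument makes the role of connectedness more transparent and is closer in spirit to the usual proof that vertex stabilizers in a tree are commensurable. Both are entirely elementary.
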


\begin{proof}
	Given a vertex $v\in \GG$ with an edge $e\subset \GG$ containing $v$, $\stab_G(v)$ contains $\stab_G(e)$ as a finite index subgroup, as there are only finitely many edges containing $v$. Then for any two adjacent vertices, their stabilizers are commensurable. Now the lemma follows from the connectedness of $\GG$. 
\end{proof}

\subsection{Some first examples}
While checking the   hypotheses of Proposition~\ref{prop-startingpoint} in full generality will occupy the next few sections, we furnish one example right away.

\begin{lemma}[\cite{wise1996non}]
	There exists a compact CSC, denoted $X$, with a graph of graphs structure such that 
	\begin{enumerate}
		\item the underlying graph $\GG$ is a circle;
		\item $\Gamma=\pi_1(X)$ is an irreducible lattice acting on $\widetilde X=T_h\times T_v$.
	\end{enumerate}
\end{lemma}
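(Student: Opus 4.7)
The plan is to exhibit Wise's explicit graph of graphs. Take the base $\GG$ to be the circle with one vertex and one edge, the vertex graph $V$ to be a bouquet of $3$ circles (so $\pi_1(V)\cong F_3$), and the edge graph $E$ to be a connected $2$-sheeted cover of $V$ (so $E$ has two vertices, six edges, and $\pi_1(E)\cong F_5$). Both boundary morphisms $\partial_0,\partial_1:E\to V$ are chosen to be $2$-sheeted covering maps, with combinatorial data to be fixed in the next step. The resulting square complex $X$ has one vertex, three horizontal loops, two vertical loops (one per vertex of $E$), and six squares (one per edge of $E$); applying the discussion of Section~\ref{subsec-csc}, one obtains $\Gamma=\pi_1(X)$ with HNN presentation $F_3\ast_{F_5}$.

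Next I would verify the CSC condition. Since $X$ has a unique vertex $x$, the link $\mathrm{Lk}(x)$ is bipartite with $6$ horizontal half-edge vertices and $4$ vertical half-edge vertices, and its edges are in bijection with the $24$ corners of the $6$ squares of $X$. One arranges the cover data so that each pair (horizontal half-edge, vertical half-edge) arises exactly once as a square corner at $x$, so that $\mathrm{Lk}(x)=K_{6,4}$. By the characterization of CSCs recalled in Section~\ref{subsec-csc}, the universal cover then splits as $\widetilde X=T_h\times T_v$ with $T_h$ a $6$-regular tree and $T_v$ a $4$-regular tree, and $\Gamma$ becomes a cocompact lattice in $\aut(T_h)\times\aut(T_v)$ by construction.

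The main obstacle is irreducibility of $\Gamma$ as an abstract group. I would cite Wise's original verification in \cite{wise1996non}: the combinatorics of $\partial_0,\partial_1$ is chosen precisely so that no finite-index subgroup preserves a splitting of horizontal and vertical axes, and in fact so that $\Gamma$ is not residually finite. Concretely, one rules out a virtual product decomposition $\Gamma'=\Gamma_1\times\Gamma_2$ by exhibiting, for arbitrarily large $n$, elements of $\Gamma$ acting trivially on the $n$-ball in $T_h$ while acting nontrivially on $T_v$; the presence of such elements is incompatible with a direct product splitting on any finite-index subgroup. This local-action analysis, rather than the link computation, is where the substantive work lies and is the delicate part of Wise's argument.
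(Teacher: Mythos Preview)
Your proposal is correct and follows essentially the same route as the paper: both exhibit Wise's explicit $(F_3)\ast_{F_5}$ example over a one-vertex circle with vertex space a bouquet of three circles and edge space a two-vertex, six-edge graph mapping in by two distinct degree-two covers, and both defer the irreducibility verification to \cite{wise1996non}. The only differences are cosmetic: you add an explicit $K_{6,4}$ link check that the paper omits, while the paper appends a second, more general construction (doubling an amalgam $A\ast_C A'$ over an edge to obtain a CSC over a circle) that you do not need for mere existence; also, your heuristic for Wise's irreducibility argument (indiscreteness of the factor projection) is a valid equivalent criterion, though Wise's original proof proceeds via anti-tori rather than the local-action description you sketch.
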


\begin{proof}
	We will focus on particular examples of graph of graphs as above such that the underlying graph $\GG$ is a circle, constructed by Wise in  \cite{wise1996non}, see also \cite[Section 4]{wise-csc}. In this case the underlying graph $\GG$ is a circle with a single vertex. The vertex space is a wedge of three circles, the edge space is a graph with two 0-cells, with a loop at each and 4 edges connecting the two 0-cells. The two boundary morphisms from the edge space to its vertex space are two different degree two covering maps. In particular, $\pi_1(X)$ is isomorphic to an HNN extension $(F_3)\ast_{F_5}$, where $F_5$ includes into $F_3$ as two distinct index two subgroups. It is shown in \cite{wise1996non} that $\Gamma=\pi_1(X)$ is irreducible.
	
	More generally, we can start with an irreducible lattice $\Gamma_0$ in a product of trees as in the work of
	Burger-Mozes \cite{bm97,bm2000}, Wise \cite{janzen2009smallest,wise-aperiodic,wise-csc}, Janzen-Wise \cite{janzen2009smallest}, given as a free product
	with amalgamation (and not an HNN extension). 
	Thus, $\Gamma_0=A\ast_C A'$, and the underlying CSC (denoted $X$) is a graph of graphs over an edge, with two vertex spaces being $Y_1$ and $Y_2$. We can glue two identical copies of $X$ isometrically along $Y_1\sqcup Y_2$ to obtain a new CSC, denoted $X'$. Then the underlying graph of $X'$ is circle. If $\pi_1 X$ is an irreducible lattice acting on product of two tress, then $\pi_1 X'$ is also an irreducible lattice acting on (possibly different) product of trees. 
\end{proof}


\begin{theorem}\label{thm-wisecsc}
Let $\Gamma=(F_3)\ast_{F_5}$ denote the fundamental group of the CSC complex described by Wise in \cite[Section 4]{wise-csc} so that $\Gamma$ acts on a product $T_h \times T_v$ of regular trees as a lattice. Let $\phi: \Gamma \to \Z$ denotes the $\pi_1$-homomorphism induced by $X=(T_h\times T_v)/\Gamma\to T_h/\Gamma\cong \mathbb S^1$ and let $\Gamma_0=\ker (\phi)$.

Let $M$ be a horizontal fiber of $X$ passing through a vertex of $X$, and we view $\pi_1 M$ as a subgroup of $\Gamma$, as well as a subgroup of $\aut(T_h)$. Let $K=\ker (\phi|_{\pi_1 M})$. Then the following hold true.
\begin{enumerate}
	\item The common commensurator of $K$ and $\pi_1 M$   in $\aut(T_h)$ is indiscrete.
	\item The commensurator of $K$ in $\aut(T_h)$ does not have a finite index subgroup that normalizes some finite index subgroup of $K$.
\end{enumerate}
\end{theorem}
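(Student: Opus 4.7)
The strategy is to deduce both assertions as an immediate application of Proposition~\ref{prop-startingpoint} with $\KK_h = T_h$, $\KK_v = T_v$, and $L = \Z$. Under this dictionary, Assertion (1) of the theorem is exactly conclusion (2) of the proposition, and Assertion (2) of the theorem is exactly conclusion (3), which is available precisely because $L \cong \Z$. Thus the entire task reduces to verifying the six hypotheses of Proposition~\ref{prop-startingpoint} in the present setting.

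Hypotheses (1), (2), (4), (5), and (6) are essentially bookkeeping. The trees $T_h$ and $T_v$ are locally finite and CAT(0) with finite shape, and by construction of $X = (T_h \times T_v)/\Gamma$ as a CSC, the lattice $\Gamma$ acts freely, cocompactly, and properly discontinuously on $T_h \times T_v$ without interchanging the two factors. Since the horizontal base graph $\HH = T_h/\Gamma$ is a circle with a single vertex, $\pi_1(\HH) \cong \Z$; we take $\beta : \pi_1(\HH) \to \Z$ to be the identity, so that the composition $\phi : \Gamma \to \pi_1(\HH) \to \Z$ of Proposition~\ref{prop-startingpoint} agrees with the homomorphism called $\phi$ in the statement of the theorem. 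The horizontal fiber $M$ is the one already prescribed.

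The only substantive check, and the step I expect to be the main obstacle, is Hypothesis (3): the image of the factor action $\Gamma \to \aut(T_h)$ must be indiscrete. This is where we need external input. Wise \cite{wise1996non, wise-csc} proves that this particular $\Gamma = (F_3)\ast_{F_5}$ is irreducible as an abstract group; then the Caprace--Monod theorem \cite{caprace-monod2} ensures that the projection of such an irreducible lattice to each factor has nondiscrete (in fact dense) image in $\aut(T_h)$. With this single fact in hand, every hypothesis of Proposition~\ref{prop-startingpoint} is met, and both assertions of the theorem follow at once from conclusions (2) and (3) of that proposition.
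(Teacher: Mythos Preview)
Your proposal is correct and follows exactly the paper's approach: the paper's own proof is the single sentence ``This follows directly from Proposition~\ref{prop-startingpoint} by setting $T_h=\KK_h$,'' and you have simply spelled out the hypothesis-by-hypothesis verification (including the one nontrivial point, irreducibility of $\Gamma$ via Wise giving indiscreteness of the factor image) that the paper leaves implicit.
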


\begin{proof}
This follows directly from Proposition~\ref{prop-startingpoint} by setting
$T_h =\KK_h$.
\end{proof}

\section{Higher dimensional examples and finiteness properties}\label{sec-hd}
\subsection{RAAGs and Salvetti complexes}\label{sec-prel}
We refer to \cite{charney2007introduction} for background on right-angled Artin groups and Salvetti complexes.
Let $\Gamma_\GG$ be the right-angled Artin group associated with the finite graph $\GG$. Let $S_\GG$ be the Salvetti complex of $\Gamma_\GG$, and let $X_\GG$ be the universal cover of $S_\GG$. Recall that the 1-skeleton of $X_\GG$ is the Cayley graph of $\Gamma_\GG$. Hence each edge in the 1-skeleton of $X_\GG$ is oriented and labeled by the label of a vertex of $\GG$.

Each complete subgraph $\Delta$ of $\GG$ gives a torus $S_\Delta$ inside $S_\GG$, called a \emph{standard torus} of type $\Delta$. A \emph{standard flat} of type $\Delta$ in $X_\GG$ is a lift of a standard torus of type $\Delta$. 
A \emph{standard line} is a 1-dimensional standard flat. An automorphism of $X_\GG$ is \emph{flat-preserving}, if it sends standard flats to standard flats, or equivalently, it sends standard lines to standard lines.

Given a group $\Gamma$, a normal subgroup $N$ of $\Gamma$ is \emph{non-trivial} if $N$ is infinite and has infinite index in $\Gamma$.

Our goal here is to construct an irreducible lattice $H$ acting on $X_\GG\times T$ for some graph $\GG$ and some regular tree $T$ such that the following holds:
\begin{enumerate}
	\item For a vertex $x\in T$, let $H_x$ denote the $H$-stabilizer  of the slice $X_\GG\times \{x\}$, viewed as a subgroup of $\aut(X_\GG)$ via the action of $H_x$ on the first factor $X_\GG$. We demand that there exists
	a vertex $x\in T$, such that $H_x$  admits a non-trivial normal subgroup $N\unlhd H_x$ with non-discrete commensurator in $\aut(X_\GG)$.
	\item $N$ satisfies higher finiteness properties. 
\end{enumerate} 
This will allow us to generalize the normal subgroup $K$ in the example of Theorem~\ref{thm-wisecsc} to a normal subgroup $N$ satisfying higher finiteness properties. Note that the example $K$ in Theorem~\ref{thm-wisecsc} is typically infinitely generated, and hence satisfies no finiteness properties.\\

\noindent {\bf Scheme:} 
In order to arrange this, we  start with a ``seed group'', which is an irreducible lattice $H_0$ acting on a product of two trees. 
We then enlarge the first tree factor to a carefully chosen $X_\GG$ and extend the action of $H_0$ to arrange the desired irreducible lattice acting on $X_\GG\times T$. Our first step will be Lemma~\ref{lem:tree} below: the lattice in the lemma is required to be irreducible, so that after the extension, we  still obtain an irreducible lattice. Irreducibility ultimately leads to non-discrete commensurators in the example we want to arrange.  Lemma~\ref{lem:tree} has two requirements:
\begin{enumerate}
	\item The first  ensures that we can extend the action to a larger space.
	\item The second gives us control on a particular slice. This will ultimately be helpful for proving finiteness properties of normal subgroups of stabilizers of certain slices.
\end{enumerate}  

\subsection{A seed irreducible lattice}\label{sec-seed}
Let $T_h$ be the Cayley graph of the free group with $4$ generators $F_4$, generated by $w_1,w_2,w_3,w_4$. 
Note that edges in $T_h$ are equipped with the usual labels and orientations of edges
as in a standard Cayley graph.

Let $T_v$ be another copy of the Cayley graph of $F_4$, generated by $u_1,u_2,u_3,u_4$. Then each edge of $T_h\times T_v$ is oriented and labeled. We will think $T_h$ as the ``horizontal factor'', and $T_v$ as the ``vertical factor''.

The goal of this subsection is to construct an irreducible lattice acting on $T_h\times T_v$  respecting the labels and orientations of edges only up to an extent. This is made precise as follows.

\begin{lemma}
	\label{lem:tree}
	There exists a torsion free irreducible lattice $H$ acting on $T_h\times T_v$ without exchanging the two factors such that
	\begin{enumerate}
		\item the action of $H$ preserves orientations (but not necessarily labels) of edges, and sends standard flats to standard flats;
		\item there exists a vertex $x\in T_v$ such that the action of the stabilizer of $T_h\times \{x\}$ on it is also label-preserving.
	\end{enumerate}
\end{lemma}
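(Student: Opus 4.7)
The plan is to realize $H$ as $\pi_1(Y)$ for a compact one-vertex complete square complex (CSC) $Y$ whose universal cover carries the standard Cayley-graph structure. Let $Y$ have a single vertex $v$, four oriented horizontal loops labeled $w_1,\ldots,w_4$, four oriented vertical loops labeled $u_1,\ldots,u_4$, and sixteen squares attached so that the link at $v$ is the complete bipartite graph $K_{8,8}$ on the eight horizontal and eight vertical half-edges. Then $Y$ is a CSC by Definition~\ref{def-csc}, and pulling labels and orientations back along $\widetilde{Y}\to Y$ identifies $\widetilde{Y}$ with $T_h\times T_v$ equipped with the standard Cayley-graph structure of $F_4\times F_4$. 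Torsion-freeness of $H=\pi_1(Y)$ and the non-exchanging of the two factors are automatic from this graph-of-graphs structure.

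The substantive point is to choose the attaching data of the $16$ squares so that $H$ is irreducible as an abstract group; the ``flat'' choice yielding $H\cong F_4\times F_4$ is reducible, so the square attachments must be combinatorially twisted. One-vertex CSC examples of this kind with irreducible fundamental group are provided by Burger-Mozes \cite{bm97,bm2000}, Wise \cite{wise1996non,wise-csc,wise-aperiodic}, and Janzen-Wise \cite{janzen2009smallest}; for the precise valence data ($8$-regular on both sides), an example can be extracted either directly from that literature or by the doubling-and-amalgamation trick from the previous lemma applied to an irreducible Burger-Mozes amalgam $A\ast_C A'$ with the required valences, glued along its edge space to obtain a one-vertex graph-of-graphs over a circle. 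Verifying that irreducibility survives the doubling is the only substantive obstacle in the argument.

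Given such a $Y$, both conditions follow essentially tautologically. For (1), deck transformations of $\widetilde{Y}\to Y$ preserve the covering projection, hence the pulled-back labels and orientations as well as the product decomposition, so they send standard flats to standard flats. For (2), take $x\in T_v$ to be any lift of $v$; the $H$-stabilizer of $T_h\times\{x\}$ is $\pi_1(F_v)\cong F_4$ (where $F_v$ is the horizontal rose at $v$), acting on $T_h\times\{x\}\to F_v$ as deck transformations, which patently preserves the horizontal labels. In fact the construction yields the stronger conclusion that $H$ acts label-preservingly on both factors globally, \emph{a fortiori} giving both of the lemma's requirements.
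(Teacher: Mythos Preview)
Your argument contains a genuine error that cannot be patched. You assert that pulling back labels and orientations along $\widetilde{Y}\to Y$ identifies $\widetilde{Y}$ with $T_h\times T_v$ carrying the \emph{product} Cayley-graph structure of $F_4\times F_4$, and that $H=\pi_1(Y)$ therefore acts label-preservingly. But the group of label- and orientation-preserving automorphisms of the Cayley graph of $F_4$ is exactly $F_4$ itself (left multiplication), so the label- and orientation-preserving automorphism group of $T_h\times T_v$ with the product Cayley structure is precisely $F_4\times F_4$. Since $H$ acts simply transitively on vertices (your $Y$ has one vertex) and is contained in this group, the orbit map forces $H=F_4\times F_4$, which is reducible. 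Thus your ``stronger conclusion that $H$ acts label-preservingly on both factors globally'' is incompatible with irreducibility; the parenthetical ``but not necessarily labels'' in the lemma is not incidental.

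What actually happens for a twisted one-vertex CSC is that the pulled-back labeling is \emph{not} a product labeling: the label of a horizontal edge $(e,y)$ depends on $y$, because opposite horizontal sides of a twisted square $w_iu_jw_k^{-1}u_l^{-1}$ with $k\neq i$ carry different labels. So while deck transformations do preserve the pulled-back structure, that structure is not the one the lemma is about. One would need a separate argument that $H$ preserves orientations and standard lines with respect to a fixed product Cayley structure on $T_h\times T_v$; this does not follow from your covering argument, and in fact requires restrictions on the square-attaching data together with a nontrivial check. The paper circumvents all of this with the bigon-doubling construction: starting from an irreducible lattice on a product of $4$-regular trees $U_1\times U_2$, replacing each edge by an oppositely oriented bigon, and taking $H$ to be the group of lifts to the universal cover $T_h\times T_v$. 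In that setup, orientation-preservation is automatic (each unoriented edge becomes a pair of oriented edges swapped under any edge-inversion), standard lines are exactly lifts of bigons (which are manifestly preserved), and condition~(2) is arranged via residual finiteness and Leighton's theorem on a single horizontal slice.
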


To construct such an example, we first start with the action of a torsion free group $L$ on $U_1\times U_2$ where $U_1, U_2$ are both regular trees of valence $4$. To start off, edges of $U_1$ and $U_2$ are not oriented and labeled. For a vertex $z\in U_2$, we define $U_{1,z}$ to be the quotient of $U_1\times \{z\}$ by the $L$-stabilizer of $U_1\times \{z\}$. We now formulate a condition on $L$ such that one can use $L$ as a building block to construct the desired example in Lemma~\ref{lem:tree}.

\begin{lemma}
	\label{lemma:smaller tree}
	Suppose that there exists a torsion free irreducible lattice $L$ acting on $U_1\times U_2$ properly and cocompactly by automorphisms without exchanging the two factors such that one can find a vertex $z_0\in U_2$ with the following properties: 
	\begin{itemize}
		\item each edge in $U_{1,z_0}$ is embedded;
		\item it is possible to label each edge of $U_{1,z_0}$ by one of $\{w_1,w_2,w_3,w_4\}$ such that the labels of two different edges incident on the same vertex of $U_{1,z_0}$ are different.
	\end{itemize}
	Then Lemma~\ref{lem:tree} holds true.
\end{lemma}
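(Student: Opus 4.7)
The plan is to produce $H$ as a finite-index subgroup of $L$, after identifying the two tree factors $U_1$ and $U_2$ with $T_h$ and $T_v$ via the labeling data from $U_{1,z_0}$. First, pull back the edge-labeling of $U_{1,z_0}$ along the covering map $U_1\to U_{1,z_0}$, which is a genuine cover because the edges of $U_{1,z_0}$ are embedded and the deck group $\stab_L(U_1\times\{z_0\})$ acts freely (since $L$ is torsion-free). The pulled-back labeling equips $U_1$ with an edge-coloring by $\{w_1,w_2,w_3,w_4\}$ such that each vertex's four incident edges carry four distinct labels; together with a choice of orientations, this identifies $U_1$ with $T_h$ as an oriented labeled tree. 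Identify $U_2$ with $T_v$ as a 4-regular oriented tree as well; the specific labeling of $U_2$ is immaterial, since condition (2) of Lemma~\ref{lem:tree} only demands label-preservation on the $T_h$-factor.

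Next, pass to a finite-index subgroup $H$ of $L$ that has no edge inversions on either factor. Such an $H$ exists because $L$ has only finitely many orbits of edges on $U_1\times U_2$ by cocompactness, and at each orbit the ``no inversion'' condition cuts out a subgroup of index at most $2$; intersecting these finitely many finite-index subgroups yields a finite-index subgroup of $L$. The resulting $H$ is torsion-free (as a subgroup of $L$), irreducible as an abstract group (a finite-index subgroup of a group irreducible as an abstract group is itself so, directly from the definition), and preserves edge orientations by construction. Condition (2) of Lemma~\ref{lem:tree} then follows immediately: taking $x\in T_v$ to correspond to $z_0\in U_2$, the stabilizer $\stab_H(T_h\times\{x\})$ is a finite-index subgroup of $\stab_L(U_1\times\{z_0\})$, which is the deck group of $U_1\to U_{1,z_0}$, and deck transformations preserve the pulled-back labeling on $T_h=U_1$.

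The main obstacle is the ``sends standard flats to standard flats'' part of condition (1). A standard flat in $T_h$ is a bi-infinite monochromatic geodesic (an axis of a generator), which in $U_1$ is the lift of a monochromatic closed walk in $U_{1,z_0}$. Although $H$ need not preserve labels globally, one must verify that $H$ permutes the set of monochromatic geodesics, so that standard flats go to standard flats. This amounts to a local-to-global coherence argument: the local label-permutation at a vertex induced by an element of $H$ must be compatible with the permutations at adjacent vertices along each monochromatic axis. If needed, this is enforced by a further finite-index passage restricting to elements whose induced action on the four label classes is globally coherent, without sacrificing torsion-freeness, irreducibility as an abstract group, or the lattice property.
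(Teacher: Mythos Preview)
Your approach has a fundamental gap right at the start: you attempt to identify $U_1$ directly with $T_h$, but these trees have different valence. The tree $U_1$ is $4$-regular, whereas $T_h$ is the Cayley graph of $F_4$ and hence $8$-regular (each vertex has four outgoing and four incoming edges labeled by $w_1,\dots,w_4$). So the sentence ``together with a choice of orientations, this identifies $U_1$ with $T_h$ as an oriented labeled tree'' is simply false, and everything downstream collapses: $H$ is supposed to be a lattice acting on $T_h\times T_v$, not on $U_1\times U_2$.

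The paper resolves this by a doubling trick. One replaces each edge of $U_i$ by a pair of oppositely oriented edges with the same label (a \emph{standard bigon}), obtaining a graph $U'_i$ of valence $8$; the universal cover of $U'_1\times U'_2$ is then exactly $T_h\times T_v$, and $H$ is defined as the group of all lifts to $T_h\times T_v$ of the $L$-action on $U'_1\times U'_2$. This single move buys everything you were struggling to arrange by finite-index passage. Orientation-preservation is automatic, since $L$ sends an oriented edge of $U'_i$ to the oriented edge between the image endpoints with the matching direction. The standard-flat condition is automatic, since $L$ sends bigons to bigons in $U'_i$, and bigons lift precisely to standard lines in $T_h$; your hand-wavy ``further finite-index passage to make label permutations globally coherent'' is neither needed nor obviously available. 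Irreducibility of $H$ follows from that of $L$ via the short exact sequence $1\to\pi_1((U'_1\times U'_2)/L)\to H\to L\to 1$.
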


\begin{proof}
	We pull back edge labels on $U_{1,z_0}$ to obtain  edge labels on $U_1$. Moreover, we assign  arbitrary edge labels on $U_2$ such that each edge of $U_2$ is labeled by one of $\{u_1,u_2,u_3,u_4\}$ and the labels of two different edges incident on the same vertex are different. Now we replace each edge of $U_1$ joining a pair of endpoints  by a pair of oppositely oriented edge connecting the same pair endpoints. This pair of edges is labeled by the same letter, but  note that  their orientations are different. Let the resulting graph be $U'_1$. Edges of $U'_1$ come in pairs as described. Each such pair of edge form a \emph{standard bigon} in $U'_1$.
	We define $U'_2$ similarly. Then the action $L\acts U_1\times U_2$ gives a free and cocompact action $L\acts U'_1\times U'_2$ which preserves orientations of edges (though the action might not preserve edge labels). Moreover, the second assumption of the lemma implies that there exists a vertex $x'\in U'_2$ such that the $L$-stabilizer of $U'_1\times\{x'\}$ acts on $U'_1\times \{x'\}$ in a label-preserving way.
	
	The universal cover of $U'_1\times U'_2$ with the pulled back edge labels and orientations is exactly $T_h\times T_v$ (this follows from the fact that the valence of each vertex has doubled in going from $U_i$ to $U_i'$ for $i=1,2$).  Let $H$ be the group of automorphisms of $T_h\times T_v$ given by lifts of automorphisms of $U'_1\times U'_2$ coming from the $L$-action. Then there is an exact sequence:
	$$
	1\to \pi_1((U'_1\times U'_2)/L)\to H\to L\to 1.
	$$
	Note that $H$ acts properly and cocompactly on $T_h\times T_v$ preserving orientations of edges. As $L$ is irreducible, $H$ is irreducible. As $L$ sends standard bigons to standard bigons, $H$ sends standard lines to standard lines. The second requirement of Lemma~\ref{lem:tree} follows from the last sentence of the previous paragraph.
\end{proof}	

\begin{lemma}
	\label{lem:rf}
	Suppose there is a torsion free irreducible lattice $L_0$ acting on $U_1\times U_2$ cocompactly such that $H$ is residually finite. Then there exists $L$ satisfying the requirements of Lemma~\ref{lemma:smaller tree}. 
\end{lemma}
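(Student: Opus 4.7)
The plan is to produce $L$ as a finite-index normal subgroup of $L_0$, constructed via residual finiteness so that for a fixed vertex $z_0\in U_2$, the quotient $U_{1,z_0}=U_1/(L\cap H_0)$ is a 4-regular bipartite graph of girth at least $3$ (hence with embedded edges). K\"onig's edge-coloring theorem then supplies a proper 4-edge-coloring with labels $\{w_1,w_2,w_3,w_4\}$, verifying the requirements of Lemma~\ref{lemma:smaller tree}.

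Let $H_0\le L_0$ denote the stabilizer of the slice $U_1\times\{z_0\}$. Since $L_0$ is torsion-free and acts properly on the CAT(0) space $U_1\times U_2$, the action is free; in particular no element of $H_0$ inverts an edge of $U_1$ (such an element would fix the midpoint of the edge in the slice), so $H_0\acts U_1$ is free, cocompact, and without inversions, with finite 4-regular quotient.

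Two obstructions are encoded as data in $L_0$. First, fixing orbit representatives $v_1,\dots,v_k$ for $H_0\acts V(U_1)$, set
\[
T:=\{h\in H_0\setminus\{1\}:d(v_i,hv_i)\le 2\text{ for some }i\},
\]
a finite set since $U_1$ is locally finite. Second, the tree $U_1$ carries a canonical bipartition $c:V(U_1)\to\Z/2$, inducing a homomorphism $\phi:L_0\to\Z/2$ defined by $\phi(g):=c(gv)+c(v)\pmod 2$, which is well-defined independently of $v$, since any automorphism of a connected bipartite graph either preserves or globally swaps $c$. By residual finiteness of $L_0$ we pick a finite-index normal subgroup $L'\unlhd L_0$ with $L'\cap T=\emptyset$, and set $L:=L'\cap\ker\phi$, still a finite-index normal subgroup of $L_0$.

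Let $H:=L\cap H_0$; normality of $L$ in $L_0$ forces $H$ normal in $H_0$. Any cycle of length $\le 2$ in $U_1/H$ at a vertex $\pi(gv_i)$ would lift to some $h\in H$ with $d(gv_i,hgv_i)\le 2$, i.e.\ $g^{-1}hg\in H\cap T$; but $H\cap T\subset L'\cap T=\emptyset$, so $U_1/H$ has girth $\ge 3$ and embedded edges. Since $H\subset\ker\phi$, the bipartition $c$ descends to $U_1/H$, making it bipartite. The subgroup $L$ inherits torsion-freeness, cocompactness on $U_1\times U_2$, non-exchange of factors, and irreducibility (a finite-index subgroup of an irreducible lattice is irreducible, directly from the paper's definition). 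K\"onig's theorem applied to the 4-regular bipartite graph $U_1/H$ with embedded edges yields the required proper 4-edge-coloring. The main technical subtlety is arranging that $H$ is \emph{normal} in $H_0$: residual finiteness of $L_0$ supplies a finite-index normal subgroup avoiding only a finite set of representative bad elements, and normality then propagates this avoidance to all $H_0$-conjugates simultaneously, so one finite set of representatives suffices to control short cycles at every vertex.
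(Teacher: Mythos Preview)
Your proof is correct and takes a genuinely different route from the paper's. The paper first passes to an index-two subgroup so that $L_0$ preserves the factors, then fixes $z$, uses residual finiteness of the free group $L_z$ to get a cover with embedded edges, and invokes \emph{Leighton's graph covering theorem} to obtain the proper $4$-edge-labeling (a common finite cover with a graph already carrying such a labeling); only afterwards does it use residual finiteness of $L_0$ to realize this cover as the quotient by a slice-stabilizer of a finite-index subgroup. You instead go directly to residual finiteness of $L_0$, using normality of $L$ in $L_0$ to control short cycles at every vertex simultaneously, and then replace Leighton's theorem by the more elementary \emph{K\"onig edge-coloring theorem}, at the cost of the extra (but easy) step of intersecting with $\ker\phi$ to force bipartiteness of the quotient. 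Your argument is self-contained and avoids the nontrivial citation to Leighton; the paper's argument is perhaps more in the spirit of covering-space theory and does not need the bipartition step.

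One small omission: you assert that $L$ inherits ``non-exchange of factors'' and you define $\phi:L_0\to\Z/2$ via the factor action on $U_1$, but neither is available unless $L_0$ already preserves the factors. As in the paper, you should first pass to an index-two subgroup of $L_0$ to arrange this; everything else then goes through as written.
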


\begin{proof}
	After passing to an index two subgroup if necessary, we can assume that $L_0$ preserves  the two factors. Take an arbitrary vertex $z\in U_2$, let $L_{z}$ be the $L_0$-stabilizer of $U_1\times \{z\}$. Let $U_{1,z}$ be defined as before. Then there is a finite cover $\hhat U_{1,z}$ of $U_{1,z}$ such that 
	\begin{itemize}
		\item each edge in $\hhat U_{1,z}$ is embedded;
		\item it is possible to label each edge of $\hhat U_{1,z}$ using labels from $\{w_1,w_2,w_3,w_4\}$ such that the labels of two different edges incident on the same vertex of $\hhat U_{1,z}$ are different.
	\end{itemize}
	Indeed, the first condition can be arranged using the residual finiteness of $L_z$, which is a finitely generated free group. The second item can be arranged using Leighton's graph covering theorem (which says that two finite graphs with a common cover have a common finite cover, see \cite{MR693362}). Note that any further finite covers of $\hhat U_{1,z}$ will satisfy the above two items as well.
	
	Let $\hhat L_z$ be the finite index subgroup of $L_z$ corresponding to the covering $\hhat U_{1,z}\to U_{1,z}$. As $L_0$ is residually finite, we can find a finite index subgroup $\hhat L_0$ of $L_0$ such that $\hhat L_0\cap L_z\subset \hhat L_z$. Now consider the action $\hhat L_0\acts U_1\times U_2$, and let $U'_{1,z}$ be the quotient of $U_1\times \{z\}$ by the $\hhat L_0$-stabilizer of $U_1\times\{z\}$. Then $U'_{1,z}$ is a finite cover of $\hhat U_{1,z}$ (as defined in the previous paragraph), hence $U'_{1,z}$ satisfies both  conditions in the previous paragraph. Thus we are done by choosing $L=\hat L_0$.
\end{proof}

Now we are ready to prove Lemma~\ref{lem:tree}.
\begin{proof}[Proof of Lemma~\ref{lem:tree}]
	By the main theorem of \cite{borel-harder}, there exist irreducible (necessarily uniform) lattices in finite products of p-adic simple groups. 
	In particular, there exists a residually finite torsion free irreducible lattice acting cocompactly on $U_1\times U_2$. Now we 
	are done by Lemma~\ref{lem:rf} and Lemma~\ref{lemma:smaller tree}.
\end{proof}

\subsection{Extending actions over Salvetti complexes}\label{sec-hughes}
In this subsection and the next, we adapt a construction due to Hughes \cite[Section 7.2]{hughes2022graphs}.
Let $\VV X_\GG$ be the vertex set of $X_\GG$. Given a group $H$ acting by flat-preserving automorphisms on $X_\GG$, we have a \emph{type cocycle} $c: H\times \VV X_\GG\to \aut(\GG)$ with $h\in H$ and $x\in \VV X_\GG$ defined as follows. For a vertex $v\in \GG$, let $\ell$ be the standard line of type $v$ containing $x$. Then $c(h,x)(v)$ is defined to be the type of $h(\ell)$. One readily verifies that $c(h,x):V\GG\to V\GG$ preserves adjacency of vertices, hence extends to an automorphism of $\GG$; and $c(h,x)$ is indeed a cocycle.

Let $\GG_1$ be an induced subgraph of $\GG_2$. Then there is a locally isometric embedding $S_{\GG_1}\to S_{\GG_2}$. By pre-composing this with the covering map $X_{\GG_1}\to S_{\GG_1}$, we obtain $X_{\GG_1}\to S_{\GG_2}$. While $X_{\GG_1}\to S_{\GG_2}$ is not a covering map, we can ``complete'' it to a covering map as follows. Consider the homomorphism $\Gamma_{\GG_2}\to \Gamma_{\GG_1}$ fixing each generator in $\GG_1$ and sending all generators in $\GG_2\setminus\GG_1$ to identity. Let $K$ be the kernel of this homomorphism and $Z$ be the cover of $S_{\GG_2}$ corresponding to $K$. Then there is an embedding $X_{\GG_1}\to Z$. Under such an embedding, $X_{\GG_1}$ and $Z$ have the same vertex set. Moreover, we can obtain the 1-skeleton of $Z$ from the 1-skeleton of $X_{\GG_1}$ by attaching a collection of edge loops to each of the vertices of $X_{\GG_1}$, one edge loop for each vertex outside $\GG_2\setminus \GG_1$. The complex $Z$ is called the \emph{canonical completion} of $X_{\GG_1}$ with respect to the locally isometric embedding $X_{\GG_1}\to S_{\GG_2}$. This is a special case of more general construction by Haglund and Wise \cite{HaglundWise2008}.

\begin{lemma}
	\label{lem:extend2}
	Let $\GG_1,\GG_2,Z$ be as above.
	Suppose $H$ is a group acting freely and cocompactly on $X_{\GG_1}$ such that it preserves orientations of edges, and sends standard flats to standard flats.
	Let the associated type cocycle be $c_1:H\times \VV X_{\GG_1}\to \aut(\GG_1)$. 
	
	Suppose that there exists a cocycle $c_2:H\times \VV X_{\GG_1}\to \aut(\GG_2)$ such that $c_2(h,x)_{\mid \GG_1}=c_1(h,x)$ for any $h\in H$ and $x\in \VV X_{\GG_1}$. Then the action $H\acts X_{\GG_1}$ extends to an action $H\acts Z$ preserving edge orientations, such that $h\in H$ sends an edge loop based at $x\in X_{\GG_1}$ labeled by $v\in \GG_2\setminus \GG_1$ to an edge loop based at $h(x)$ labeled by $c_2(h,x)(v)$.
\end{lemma}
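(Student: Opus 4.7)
The plan is to construct the extended action on $Z$ by prescribing its behavior on the added edge loops exactly as the statement demands, and then to promote this to a cellular action on all of $Z$ using the cube-complex structure of the canonical completion.

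First, I would define a map on the $1$-skeleton of $Z$. The vertex set of $Z$ agrees with $\VV X_{\GG_1}$, so each $h \in H$ acts on vertices through its given action on $X_{\GG_1}$. Each edge of $Z$ is either an edge of $X_{\GG_1}$, where the $H$-action is already defined, or an edge loop $e_{x,v}$ based at $x \in \VV X_{\GG_1}$ and labeled by some $v \in \GG_2 \setminus \GG_1$. On the latter, I set
$$h \cdot e_{x,v} \;=\; e_{h(x),\, c_2(h,x)(v)},$$
with the orientation carried over. The cocycle identity $c_2(h_1 h_2, x) = c_2(h_1, h_2 x)\, c_2(h_2, x)$ then yields
$$(h_1 h_2) \cdot e_{x,v} \;=\; e_{h_1 h_2(x),\, c_2(h_1, h_2 x)\, c_2(h_2, x)(v)} \;=\; h_1 \cdot (h_2 \cdot e_{x,v}),$$
so the prescription is a genuine group action on the $1$-skeleton. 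Compatibility with the original action on edges of $X_{\GG_1}$ follows from the hypothesis $c_2(h,x)\restr_{\GG_1} = c_1(h,x)$.

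Next, I would extend the action over the higher-dimensional cubes. Each $k$-cube of $Z$ is determined by a basepoint $x$ together with a $k$-clique $\Delta \subset \GG_2$, and its boundary is a word in the labels of $\Delta$ (edges in $X_{\GG_1}$ if the label lies in $\GG_1$, loops at the corresponding vertex otherwise). Since $c_2(h,x) \in \aut(\GG_2)$ preserves adjacency and hence sends cliques to cliques, the image under the $1$-skeletal action of the boundary of a $k$-cube based at $x$ with clique $\Delta$ is precisely the boundary of a $k$-cube based at $h(x)$ with clique $c_2(h,x)(\Delta)$. Because $Z$ is a nonpositively curved cube complex, this cube is unique and the action extends uniquely over it; the hypothesis that $H$ already sends standard flats to standard flats in $X_{\GG_1}$ handles the case when $\Delta \subset \GG_1$.

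The main point requiring care is the group-action identity on the added edge loops, which reduces precisely to the cocycle condition for $c_2$; once this is established, the extension over squares and higher cubes is forced by the fact that each cube of $Z$ is determined by a basepoint and a clique in $\GG_2$, together with $c_2(h,x)$ being a graph automorphism. Finally, edge orientations are preserved on $X_{\GG_1}$ by assumption, and the formula $h \cdot e_{x,v} = e_{h(x), c_2(h,x)(v)}$ preserves orientation on each new loop by construction, completing the verification.
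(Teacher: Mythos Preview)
Your proof is correct and follows essentially the same approach as the paper's: define the action on the $1$-skeleton by the prescribed rule on the added loops, observe that commuting labels are sent to commuting labels (equivalently, cliques to cliques) because $c_2(h,x)\in\aut(\GG_2)$, and then extend over higher cells. Your explicit verification of the group-action identity via the cocycle condition is a welcome elaboration of what the paper leaves implicit in the phrase ``well-defined action.''
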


\begin{proof}
	The description in the lemma gives a well-defined action of $H$ on the 1-skeleton $Z^{(1)}$ of $Z$ that preserves edge orientations. The action $H\acts Z^{(1)}$ preserves $X^{(1)}_{\GG_1}$. Moreover, it send a pair of edges with commuting labels based at the same vertex to another pair of edges with commuting labels. Thus $H\acts Z^{(1)}$ extends to an action of $H$ on the 2-skeleton of $Z$. One readily checks that the action extends to higher skeleta as well.
\end{proof}

\subsection{Extending the action of the seed lattice}	\label{sec-extn}

%

Let $\GG$ be a finite simplicial graph, with a base vertex $v$ and a vertex $w\neq v$. Let $\LL$ be a wedge of four copies of $\GG$ along $v$. Then there is a folding map $f:\LL\to \GG$ sending each copy of $\GG$ to $\GG$ via the "identity" map. Let $\LL'=\{w_1,w_2,w_3,w_4\}=f^{-1}(w)$. Let $\GG_0,\LL_0,\LL'_0$ be the join of $\GG,\LL,\LL'$ with $\{u_1,u_2,u_3,u_4\}$.

There is an action of the permutation group on four elements $S_4$ on $\LL$, permuting the four copies of $\GG$'s in $\LL$. Hence $S_4$ also permutes the set $\{w_1,w_2,w_3,w_4\}$. In particular, this gives an injective homomorphism $\aut(\LL')\to \aut(\LL)$, which also induces an injective homomorphism $\aut(\LL'_0)\to\aut(\LL_0)$. The folding map $f:\LL\to \GG$ extends to a folding map $\LL_0\to \GG_0$, which we still denote by $f$.

Let $H\acts T_h\times T_v=X_{\LL'_0}$ be as in Lemma~\ref{lem:tree}. Consider the locally isometric embedding $X_{\LL'_0}\to S_{\LL_0}$ which is a composition of the covering map $X_{\LL'_0}\to S_{\LL'_0}$ and the embedding $S_{\LL'_0}\to S_{\LL_0}$. Similarly, we define $X_{\LL'}\to S_\LL$.
Let $Z_{\LL_0}$ (resp. $Z_{\LL'}$) be the canonical completion of $X_{\LL'_0}$ (resp. $X_{\LL'}$) with respect to $X_{\LL'_0}\to S_{\LL'_0}\to S_{\LL_0}$ (resp. $X_{\LL'}\to S_{\LL'}\to S_\LL$). Then $Z_{\LL_0}=Z_{\LL}\times T_v$.

Let $c_{\LL'_0}:H\times \VV X_{\LL'_0}\to \aut(\LL'_0)$ be the type cocycle associated to the action $H\acts T_h\times T_v$. Using the monomorphism $\aut(\LL'_0)\to\aut(\LL_0)$ as above, we can extend the cocycle $c_{\LL'_0}$ to a new cocycle $c_{\LL_0}:H\times \VV X_{\LL'_0}\to \aut(\LL_0)$. As in Lemma~\ref{lem:extend2}, the cocyle $c_{\LL_0}$ gives a free and cocompact action $H\acts Z_{\LL_0}=Z_{\LL}\times T_v$ preserving edge orientations. Lemma~\ref{lem:tree} (2) and the construction in Lemma~\ref{lem:extend2} imply that there exists a vertex $x\in T_v$ such that the action of the stabilizer of $Z_\LL\times T_v$ on  $Z_\LL\times T_v$ is both label-preserving and orientation-preserving.

Given an edge $e$ of $Z_{\LL_0}$, we will also call its label the \emph{$\LL_0$-label}. Its \emph{$\GG_0$-label} is defined to be the image of the label of $e$ under the map $f:\LL_0\to \GG_0$. For any vertex $v\in \LL_0$ and any automorphism $\alpha\in \aut(\LL_0)$ in the image of $\aut(\LL'_0)\to \aut(\LL_0)$, we know that $f(v)=f(\alpha(v))$. Thus the action $H\acts Z_{\LL_0}$ preserves the $\GG_0$-label of edges in $Z_{\LL_0}$ (though their $\LL_0$-label might not be preserved under the $H$-action).

Consider the factor action $H\acts Z_\LL$ of $H\acts Z_\LL\times T_v$. We define the $\LL$-labels and $\GG$-labels of edges in $Z_\LL$ in a similar way as before. Then the factor action $H\acts Z_\LL$ preserves orientations of edges and $\GG$-labels of edges.

\subsection{Bestvina-Brady groups}\label{sec-bb} We use Bestvina-Brady Morse theory \cite{bb-morse} in this subsection to construct normal subgroups with higher finiteness properties.
By the discussion in the previous subsection, each edge of $Z_{\LL}/H$ has a well-defined orientation and $\GG$-label. Now we define a cellular map $\pi:Z_\LL/H\to S_\GG$ which is orientation-preserving and $\GG$-label preserving as follows:  $\pi$ maps each vertex of $Z_\LL/H$ to the base vertex of $S_\GG$, and  each edge in $Z_\LL/H$ to the loop in $S_\GG$ with the same label in an orientation-preserving way. Note that the folding map $\LL\to \GG$ sends complete subgraphs in $\LL$ bijectively to complete subgraphs in $\GG$. It follows from the fact that the action $H\acts Z_\LL$ preserves orientations and $\GG$-labels of edges that the map $\pi$ extends to higher skeleta.

Let $W=Z_{\LL_0}/H=(Z_\LL\times T_v)/H$ and $G=\pi_1(W)$.
Let 
$\widetilde Z_\LL$ be the universal cover of $Z_\LL$. Then $\widetilde Z_{\LL_0}=\widetilde Z_\LL\times T_v$.
Note that $\widetilde Z_{\LL_0}/G=W$. 

The group $G$ can be alternatively described as follows. 
It is  the collection of all automorphisms of $\widetilde Z_{\LL_0}$ which are lifts of automorphisms of $Z_{\LL_0}$ coming from the action of $H$.  In particular, we have the following short exact sequence:
$$
1\to \pi_1(Z_{\LL_0})\to G\to H\to 1.
$$

By the discussion in the previous subsection, there exists a vertex $x\in T_v$ such that the action $\stab_H(Z_\LL\times \{x\})\acts Z_\LL\times\{x\}$ preserves orientations and labels of edges. Let $W_\LL$ be the quotient space under this action and let $J=\pi_1 W_\LL$. Then each edge of $W_\LL$ has a well-defined orientation and labeling by vertices of $\LL$.
There is a locally isometric embedding $W_\LL\to W$,  inducing an embedding of groups $J\to G$. Thus we will view $J$ as a subgroup of $G$.

Take a copy of $\mathbb S^1$, viewed as a graph with a single vertex and a single edge. We choose an orientation of the edge in $\mathbb S^1$. Let $S_\GG\to \mathbb S^1$ be the map sending each edge of $S_\GG$ to the edge in $\mathbb S^1$ in an orientation-preserving way, and then extending linearly to higher skeleton. This map is called the \emph{Bestvina-Brady map}\cite{bb-morse}. It induces a group homomorphism $\Gamma_\GG\to \mathbb Z$ sending each generator of $\Gamma_\GG$ to $1\in \mathbb Z$. The kernel of this homomorphism is the \emph{Bestvina-Brady group} of $S_\GG$\cite{bb-morse}. 

Let $\Theta: W=(Z_\LL\times T_v)/H\to Z_\LL/H\to S_\GG\to \mathbb S^1$ be the composition of the map induced by the projection $Z_\LL\times T_v\to Z_\LL$, 
the map $\pi$, and the Bestvina-Brady map.
Let $\widehat G$ be the kernel of the homomorphism $G\to \mathbb Z$ induced by $\Theta$, and let $\widehat J=\widehat G\cap J$.

\begin{lemma}
	\label{lem:fi}
	$\widehat J$ is finite index in the Bestvina-Brady group of $\Gamma_\LL$.
\end{lemma}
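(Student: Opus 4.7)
The plan is to realize $J = \pi_1(W_\LL)$ as a finite-index subgroup of $\Gamma_\LL$ via a natural covering map $W_\LL \to S_\LL$, and then identify the restriction of $\Theta_*$ to $J$ with the restriction of the Bestvina--Brady homomorphism $\Gamma_\LL \to \mathbb Z$.

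The first step is to verify that the edge-labelling of $W_\LL$ yields a finite-sheeted covering map $\bar p : W_\LL \to S_\LL$. By construction, $Z_\LL$ is the cover of $S_\LL$ corresponding to the kernel of the retraction $\Gamma_\LL \to \Gamma_{\LL'}$, so the labelling map $p: Z_\LL \to S_\LL$ is itself a covering. Let $H_x := \stab_H(Z_\LL \times \{x\})$. By the choice of $x \in T_v$ made in Section~\ref{sec-extn}, the factor action of $H_x$ on $Z_\LL$ is free and preserves the labels and orientations of all edges; hence every $h \in H_x$ satisfies $p \circ h = p$, i.e., is a deck transformation for $p$. Consequently $p$ descends to a map $\bar p: W_\LL = Z_\LL/H_x \to S_\LL$. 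Because every vertex of $Z_\LL$ has, for each vertex $\ell$ of $\LL$, exactly one outgoing and one incoming edge of label $\ell$ (a property of the canonical completion), and $H_x$ preserves this local structure, the same is true in $W_\LL$; thus $\bar p$ is a local isomorphism of cube complexes. Since $H$ acts cocompactly on $Z_\LL \times T_v$ without exchanging factors, $H_x$ acts cocompactly on $Z_\LL$, so $W_\LL$ is compact and $\bar p$ is a finite-sheeted cover. In particular, the induced map $J \to \Gamma_\LL$ is injective with finite-index image.

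Under this identification, it remains to check that $\Theta_*|_J : J \to \mathbb Z$ coincides with the restriction of the Bestvina--Brady homomorphism $\Gamma_\LL \to \mathbb Z$ to $J$. Unwinding the definition of $\Theta$, the composition $W_\LL \hookrightarrow W \to Z_\LL/H \to S_\GG$ sends every edge of $W_\LL$ with $\LL$-label $\ell$ to the edge of $S_\GG$ with $\GG$-label $f(\ell)$, in an orientation-preserving way; the Bestvina--Brady map $S_\GG \to \mathbb S^1$ then sends every oriented edge onto the generating loop. Hence $\Theta|_{W_\LL}$ agrees on the $1$-skeleton with the composition $W_\LL \to S_\LL \to \mathbb S^1$, whose second arrow is the Bestvina--Brady map of $S_\LL$. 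Since the map induced on $\pi_1$ is determined by the $1$-skeleton, the two compositions induce the same homomorphism $J = \pi_1(W_\LL) \to \mathbb Z$. Therefore $\widehat J = \ker(\Theta_*|_J) = J \cap BB_\LL$, which is finite-index in $BB_\LL$ because $J$ is finite-index in $\Gamma_\LL$.

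The main step that requires care is the first: one must check that the quotient of the cover $p: Z_\LL \to S_\LL$ by the label-and-orientation-preserving $H_x$-action remains a cover, rather than merely a local isometry. This rests on the canonical-completion structure of $Z_\LL$, which supplies the correct local combinatorial data at every vertex, data that is then inherited by $W_\LL$ precisely because $H_x$ was arranged to preserve both labels and orientations on the slice through $x$.
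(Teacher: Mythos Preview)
Your proof is correct and follows essentially the same approach as the paper: produce a label- and orientation-preserving finite covering $W_\LL \to S_\LL$ to identify $J$ with a finite-index subgroup of $\Gamma_\LL$, then check on the $1$-skeleton that $\Theta|_{W_\LL}$ agrees with the composition of this covering with the Bestvina--Brady map $S_\LL \to \mathbb S^1$. The only cosmetic difference is that the paper obtains the covering by passing to the universal cover $X_\LL$ of $W_\LL$ and observing that $J$ acts there by label- and orientation-preserving automorphisms, whereas you descend from the intermediate cover $Z_\LL$ by noting that $H_x$ acts by deck transformations of $Z_\LL \to S_\LL$; both arguments yield the same map.
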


\begin{proof}
	Note that the composition $\theta:W_\LL\to W\stackrel{\Theta}{\to} \mathbb S^1$ sends each oriented edge to an oriented edge in an orientation-preserving way (since  this is true for each step in the definition of $\theta$). On the other hand, as the universal cover of $W_\LL$ is a copy of $X_\LL$, with $J$ acting on $X_\LL$ freely preserving edge labels and orientations, we know that $J$ is a finite index subgroup of the group of deck transformations of the cover $X_\LL\to S_\LL$. In particular, this gives a label and orientation preserving covering map $g_1:W_\LL\to S_\LL$. Let $g_2:S_\LL\to \mathbb S^1$ be the Bestvina-Brady map. Then $g_2\circ g_1$ preserves orientations of edges. Thus $g_2\circ g_1$ and $\theta$ are the same when restricted to the 1-skeleton of $W_\LL$. Hence $\theta=g_2\circ g_1$, and the lemma follows.
\end{proof}

\subsection{Indiscrete commensurator and finiteness properties}\label{sec-fin}
We consider the action $G\acts \widetilde Z_{\LL_0}=\widetilde Z_\LL\times T_v$. The induced action $G\acts \widetilde Z_\LL$ on the first factor induces a homomorphism $\Phi: G\to \aut(\widetilde Z_\LL)$. Note that $\Phi(\widehat J)$ is a subgroup of $\Phi(J)$ which is infinite index, infinite and normal.

\begin{theorem}\label{thm-bb}
	The following are true.
	\begin{enumerate}
		\item $\Phi(G)$ is not discrete in $\aut(\widetilde Z_\LL)$.
		\item  $\Phi(J)$ is a cocompact lattice in $\aut(\widetilde Z_\LL)$.
		\item $\Phi(\widehat J)$ is an infinite index, and infinite normal subgroup of $\Phi(J)$. The common commensurates of $\Phi(\widehat J)$ and $\Phi(J)$ in $\aut(\widetilde Z_\LL)$ is indiscrete. 
		\item $\Phi(\widehat J)$ is finite index in the Bestvina-Brady group of $\Gamma_\LL$.
		\item $\Phi(\widehat J)$ does not contain a finite index subgroup normalized by a finite index subgroup of the commensurator of $\Phi(\widehat J)$ in $\aut(\widetilde Z_\LL)$. 
	\end{enumerate}	
\end{theorem}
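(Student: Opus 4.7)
The plan is to derive Theorem~\ref{thm-bb} from Proposition~\ref{prop-startingpoint} by taking $\KK_h=\widetilde Z_\LL$, $\KK_v=T_v$, $\Gamma=G$, $L=\Z$, letting $\phi:G\to\Z$ be the homomorphism induced by $\Theta$, and taking the horizontal fiber $M$ through a vertex to be $W_\LL$, so that $\pi_1 M=J$ and $K=\ker(\phi|_J)=\widehat J$. Noting that the canonical completion construction makes $\widetilde Z_\LL$ coincide with the universal cover $X_\LL$ of the Salvetti complex $S_\LL$, Hypotheses (1), (2), (4)--(6) of the Proposition follow directly from the constructions in Sections~\ref{sec-seed}--\ref{sec-bb}: the two factors have finite shape; $G=\pi_1(W)$ acts freely, cocompactly, and properly discontinuously on $\widetilde Z_\LL\times T_v$ preserving the two factors (since $H$ does); $\pi_1(\HH)=\pi_1(Z_\LL/H)$ surjects onto $\Z$ via $\pi$ composed with the Bestvina-Brady map; and $W_\LL$ is a horizontal fiber through a vertex. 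Hypothesis~(3), indiscreteness of $\Phi(G)$, is the content of part~(1) of the theorem and is the main obstacle, addressed last.

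Granting the hypotheses, parts (2)--(5) of the theorem follow quickly. Part~(2) is immediate since $J$ acts freely and cocompactly on $\widetilde Z_\LL=X_\LL$ (the universal cover of the compact nonpositively curved cube complex $W_\LL$) and $\Phi$ is injective on $J$. Part~(4) is Lemma~\ref{lem:fi}, combined with the injectivity of $\Phi$ on $\widehat J\subset J$. Part~(3) combines Proposition~\ref{prop-startingpoint}(1) (infinite index) with Lemma~\ref{lem:fi} and the well-known infiniteness of the Bestvina-Brady group of $\Gamma_\LL$ (so that $\widehat J$ is infinite), the evident normality $\widehat J=J\cap\ker\phi\trianglelefteq J$, and Proposition~\ref{prop-startingpoint}(2) for indiscreteness of the common commensurator. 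Part~(5) is Proposition~\ref{prop-startingpoint}(3), valid since $L\cong\Z$.

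The hard part is verifying that $\Phi(G)\subset\aut(\widetilde Z_\LL)$ is indiscrete. My strategy is as follows. By part~(2), $\Phi(J)=J$ is a cocompact lattice in $\aut(\widetilde Z_\LL)$; if $\Phi(G)$ were discrete, then $J$ would have finite index in $\Phi(G)$, forcing $[G:J\cdot\ker\Phi]<\infty$ and hence $J\cdot\ker\Phi$ to act cocompactly on $\widetilde Z_\LL\times T_v$. The kernel $\ker\Phi$ meets $\pi_1(Z_\LL)$ trivially (deck transformations act faithfully on $\widetilde Z_\LL=X_\LL$), so it embeds into $H=G/\pi_1(Z_\LL)$ as the subgroup of $H$ acting trivially on $Z_\LL$, and is therefore a discrete subgroup of $\aut(T_v)$ (as the kernel of a projection from the discrete subgroup $G\subset\aut(\widetilde Z_\LL)\times\aut(T_v)$). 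The sought contradiction will come from combining the indiscreteness of the factor action of $H$ on $T_h\subset Z_\LL$ guaranteed by Lemma~\ref{lem:tree} with the constraint that $\ker\Phi$ act cocompactly along the $T_v$ directions in order to achieve the alleged cocompactness of $J\cdot\ker\Phi$ on the product. A cleaner alternative route is to check that $G$ is irreducible as an abstract group -- any virtual direct product splitting of $G$ would descend through the irreducible quotient $H$ of Lemma~\ref{lem:tree} -- and then invoke the Caprace-Monod criterion recalled in Hypothesis~(3) of Proposition~\ref{prop-startingpoint}.
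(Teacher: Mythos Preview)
Your treatment of parts (2)--(5) matches the paper's: (2) is immediate, (4) is Lemma~\ref{lem:fi}, and (3) and (5) follow from Proposition~\ref{prop-startingpoint} once its hypotheses are in place. The disagreement is entirely in part (1).

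You call the indiscreteness of $\Phi(G)$ ``the hard part'' and offer two routes, but neither is actually carried out. The first (assume $\Phi(G)$ discrete, deduce $[G:J\cdot\ker\Phi]<\infty$, then seek a contradiction) is left as a sketch at exactly the point where the work would begin. The second (``any virtual direct product splitting of $G$ would descend through the irreducible quotient $H$'') is not correct as stated: quotients of virtually reducible groups can be irreducible---already $\Z^2\twoheadrightarrow\Z$ shows that a product structure need not survive a quotient. So you cannot conclude that $G$ is abstractly irreducible this way, and hence cannot invoke the Caprace--Monod criterion on $G$.

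The paper's argument for (1) sidesteps all of this by working with $H$, not $G$. The seed lattice $H$ is irreducible \emph{by construction} (Lemma~\ref{lem:tree}), so its image in $\aut(T_h)$ is already indiscrete. Since $T_h=X_{\LL'}\subset Z_\LL$ is an $H$-invariant subcomplex with the same vertex set, the image of $H$ in $\aut(Z_\LL)$ has the same (infinite) vertex stabilizers, hence is indiscrete. Finally, $\Phi(G)$ normalizes the deck group $\pi_1(Z_\LL)\subset\aut(\widetilde Z_\LL)$ and descends to exactly this indiscrete image of $H$ in $\aut(Z_\LL)$; lifting elements of an infinite vertex stabilizer through the covering $\widetilde Z_\LL\to Z_\LL$ (and adjusting by deck transformations to fix a chosen lift) produces an infinite vertex stabilizer for $\Phi(G)$. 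So (1) follows in a few lines from the irreducibility already built into $H$---there is no need to establish irreducibility of $G$ at all.
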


\begin{proof}
	For Assertion (1), as $H\acts T_h\times T_v$ is irreducible, we know that the projection of $H$ to $\aut(T_h)$ is not discrete. So when we pass to the action $H\acts Z_{\LL}\times T_v$, the projection of $H$ to $\aut(Z_\LL)$ is not discrete. Now (1) follows. Assertion (2) is clear. 
	For Assertions (3) and (5) follow from Proposition~\ref{prop-startingpoint}. Assertion (4) follows from Lemma~\ref{lem:fi}.
\end{proof}

The following is an immediate corollary.
\begin{cor}\label{cor-bb}
	Let $\GG$ be a finite simplicial graph, with a base vertex $v$ and a vertex $w\neq v$. Let $\LL$ be a wedge of four copies of $\GG$ along $v$. Let $X_\LL$ be the universal cover of the Salvetti complex of $\LL$. Let $BB_\LL$ and $\Gamma_\LL$ be the Bestvina-Brady subgroup and the right-angled Artin group associated with $\LL$, viewed as subgroups of $G=\aut(X_\LL)$. Then
	$\comme_G(BB_\LL)\cap \comme_G(\Gamma_\LL)$ is not discrete. In particular, $\comme_G(BB_\LL)$ is not discrete.
\end{cor}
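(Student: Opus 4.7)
\textbf{Plan for Corollary~\ref{cor-bb}.} The plan is to deduce the corollary directly from Theorem~\ref{thm-bb}(3) by showing that, inside $G=\aut(X_\LL)$, the subgroups $\Phi(J)$ and $\Phi(\widehat J)$ from that theorem are commensurable with $\Gamma_\LL$ and $BB_\LL$ respectively. I will then invoke the elementary fact that $\comme_G(A)=\comme_G(B)$ whenever $A$ and $B$ are commensurable subgroups of $G$; combined with Theorem~\ref{thm-bb}(3), this transfers the indiscreteness conclusion from the pair $(\Phi(\widehat J),\Phi(J))$ to the pair $(BB_\LL,\Gamma_\LL)$.

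First I would identify the ambient group. The complex $Z_\LL$ was constructed in Section~\ref{sec-hughes} as a canonical completion, hence as a covering space of $S_\LL$; therefore its universal cover $\widetilde Z_\LL$ is canonically identified with $X_\LL$, and $\aut(\widetilde Z_\LL)=\aut(X_\LL)=G$. Under this identification, Theorem~\ref{thm-bb}(3) states that $\comme_G(\Phi(\widehat J))\cap\comme_G(\Phi(J))$ is indiscrete in $G$. Next I would compare $\Phi(J)$ with $\Gamma_\LL$: the cellular covering map $g_1\colon W_\LL\to S_\LL$ produced in the proof of Lemma~\ref{lem:fi} realizes $J=\pi_1(W_\LL)$ as a finite-index subgroup of $\pi_1(S_\LL)=\Gamma_\LL$, and the restriction of $\Phi$ to $J$ agrees with the identification of $\pi_1(W_\LL)$ with its group of deck transformations on $X_\LL=\widetilde Z_\LL$. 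Hence $\Phi(J)$ is commensurable with $\Gamma_\LL$ inside $G$. By Lemma~\ref{lem:fi}, $\widehat J$ is finite-index in $BB_\LL$ via the same identification, so $\Phi(\widehat J)$ is commensurable with $BB_\LL$ inside $G$.

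Combining these commensurabilities with commensurator-invariance gives
\[
\comme_G(BB_\LL)\cap \comme_G(\Gamma_\LL)
= \comme_G(\Phi(\widehat J))\cap \comme_G(\Phi(J)),
\]
which is indiscrete by Theorem~\ref{thm-bb}(3). The ``in particular'' clause is immediate since $\comme_G(BB_\LL)$ contains this common commensurator. The only real obstacle is bookkeeping: one must confirm that the covering $g_1$ and the action homomorphism $\Phi$ induce the same embedding $J\hookrightarrow \Gamma_\LL\le G$ up to the standard identification $\widetilde Z_\LL=X_\LL$. This is routine once one unwinds the constructions of Sections~\ref{sec-hughes}--\ref{sec-bb}, because at each step the maps in sight preserve $\LL$-labels and orientations on the portion of the complex they act on, which forces them to factor through the universal-cover identification in a unique way.
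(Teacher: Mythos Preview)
Your proposal is correct and matches the paper's intent: the paper states the corollary as ``immediate'' from Theorem~\ref{thm-bb} without giving a proof, and the argument you outline---identifying $\widetilde Z_\LL$ with $X_\LL$, using the proof of Lemma~\ref{lem:fi} to see that $\Phi(J)$ and $\Phi(\widehat J)$ sit inside $\Gamma_\LL$ and $BB_\LL$ with finite index, and then invoking commensurator-invariance under commensurability---is exactly the unpacking the authors have in mind. Your bookkeeping remark is accurate and the step is indeed routine.
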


\section{Common Commensurators}\label{sec-cc}

\subsection{Preserving lines with holes and Common Commensurators}\label{sec-holes}

Let $X$ be a locally finite contractible cell-complex, and $G = \aut(X)$. (For the purposes of this paper, $X$ will mostly be a tree or the universal cover of a Salvetti complex.)
Let $\Gamma < G$, let $K<\Gamma$ be a normal subgroup, and let $g\in \comme_G(\Ga)$.

We write $Q=\Gamma/K$ for the quotient group.
Conjugating by $g\in G$, we obtain groups $K^g<\Gamma^g$ and a corresponding quotient $Q^g:=\Gamma^g/K^g$.

\begin{defn}\label{def-gammawholes}
For $\gamma \in \Gamma$, we shall refer to the cyclic group $\langle \gamma \rangle$ as a \emph{ $\gamma-$line in $\Gamma$}. Further, any finite index subgroup $\langle \gamma^N \rangle$ of $\langle\gamma\rangle$
will be referred to as a 
\emph{ $\gamma-$line with holes.}  A  \emph{$\Gamma-$line with holes} is a 
$\gamma-$line with holes for some $\gamma \in \Gamma$.
\end{defn}
For all applications, $\gamma$ will be of infinite order.
For any $\gamma \in \Gamma$ and $g\in \comme_G(\Ga)$,
there exists a positive integer $N$, such that $(\gamma^g)^N \in \Gamma$. Hence, for any $\gamma \in \Gamma$, and
$ g\in \comme_G(\Ga)$, the conjugation action by $g$ sends   $<\gamma^N>$ for some
$N \in \natls$ to a  $\Gamma-$line with holes.

\begin{defn}\label{def-pa}\cite{koberda-mj}
	We say an element $g\in \comme_G(\Ga)$ \emph{preserves $Q$--lines with holes} if for all $\gamma \in \Gamma$
	there exists an integer $N>0$ such that
	\[\gamma^n \equiv (\gamma^n)^g  \pmod  K\] for all $n\in N\Z$. That is, there exists $N > 0$ such that $x_m=[\gamma^{mN},g] \in K$
	for all $m\in\Z$.
\end{defn}

\begin{theorem}\label{thm:comm-nonab}\cite[Theorem 3.4]{koberda-mj}
	Let $\Ga < G$, let $K$ be a normal subgroup of $\Ga$, and let $Q = \Ga/K$.
	Suppose that \[g \in \comme_G \Ga \cap \comme_G K.\] Then
	$K^g$ preserves $Q$--lines with holes.
\end{theorem}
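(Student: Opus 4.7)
The plan is to prove the conclusion in the form: for every $k' = g^{-1}kg \in K^g$ (with $k \in K$) and every $\gamma \in \Gamma$, there is $N > 0$ with $[\gamma^{mN}, k'] \in K$ for all $m \in \Z$. A useful preliminary reduction is that a single instance of $[\gamma^N, k'] \in K$ with $N > 0$ already suffices: the identity $[ab,c]=[a,c]^b[b,c]$, together with $K \triangleleft \Gamma$, yields $[\gamma^{mN}, k'] \in K$ for every $m \in \Z$ by induction on $|m|$. So the problem becomes existential: produce one positive $N$ killing the commutator modulo $K$.

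The first substantive step is a commutator computation. Let $u_N := g\gamma^N g^{-1}$, and fix $N_0 > 0$ such that for $N \in N_0\Z$ both $u_N \in \Gamma$ and $(k')^{-1}\gamma^N k' \in \Gamma$ (possible since $g, k' \in \comme_G \Gamma$), so that the commutator $[\gamma^N, k']$ lies in $\Gamma$. Using $k \in K$ and $K \triangleleft \Gamma$, the rewriting $k^{-1}u_N k = u_N \cdot [u_N, k]$ (with $[u_N, k] \in K$) gives, after cancellation,
\[
[\gamma^N, k'] \;=\; \bigl([u_N, k]\bigr)^{g},
\]
so $[\gamma^N, k'] \in K^g \cap \Gamma$. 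Because $g \in \comme_G K$, the subgroup $K \cap K^g$ has finite index in $K^g$, hence in $K^g \cap \Gamma$ as well, so $F_0 := \pi(K^g \cap \Gamma)$ is a \emph{finite} subgroup of $Q$. Writing $\eta_N := \pi([\gamma^N, k']) \in F_0$, the task reduces to finding $N \in N_0\Z$ with $\eta_N = 1$.

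The second step is a cocycle/finite-group argument. The identity $[\gamma^{N+M},k']=[\gamma^N,k']^{\gamma^M}[\gamma^M,k']$ projects under $\pi$ to
\[
\eta_{N+M} \;=\; \bar\gamma^{-M}\,\eta_N\,\bar\gamma^M \cdot \eta_M.
\]
Rearranging, $\bar\gamma^{-M}\eta_N\bar\gamma^M = \eta_{N+M}\eta_M^{-1}$ lies in the finite subgroup $F_0' := \langle \eta_N : N \in N_0\Z\rangle \le F_0$, so $\bar\gamma^M$ normalizes $F_0'$ for every valid $M$. Since $N_Q(F_0')/C_Q(F_0')$ embeds in the finite group $\mathrm{Aut}(F_0')$, after replacing $N_0$ by a bounded multiple we may assume $\bar\gamma^{N_0}$ actually centralizes $F_0'$. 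Restricting the cocycle to multiples of this new $N_0$ collapses it to $\eta_{A+B}=\eta_A\eta_B$, making $A \mapsto \eta_A$ an honest homomorphism $N_0\Z \to F_0'$ into a finite group; its kernel has finite index, producing the desired $N$ with $\eta_N = 1$.

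The main obstacle I anticipate is the bookkeeping around the several successive choices of ``$N_0$'': one from $g, k' \in \comme_G \Gamma$ ensuring $[\gamma^N, k'] \in \Gamma$, and a second from making $\bar\gamma^{N_0}$ centralize $F_0'$. Each enlargement must remain a finite-index subgroup of $\Z$, but this is automatic since every condition imposed is of finite-index type. It is also worth noting that no assumption on $Q$ (abelian or otherwise) is needed: the hypothesis $g \in \comme_G K$ enters only through the finiteness of $F_0 = \pi(K^g \cap \Gamma)$, which is precisely what allows the cocycle to be tamed in a finite group.
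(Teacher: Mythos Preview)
The paper does not prove this statement; it is quoted verbatim from \cite[Theorem~3.4]{koberda-mj} and invoked as a black box in the proof of Proposition~\ref{prop-disccomm}. There is therefore no in-paper proof to compare against.

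That said, your argument is correct. The two steps are exactly the right ones: first, the rewriting $[\gamma^N,k'] = [u_N,k]^g$ (with $u_N = g\gamma^N g^{-1} \in \Gamma$ for $N \in N_0\Z$) places the commutator in $K^g \cap \Gamma$, whose image $F_0$ in $Q$ is finite precisely because $g$ commensurates $K$; second, the cocycle identity $\eta_{N+M} = \bar\gamma^{-M}\eta_N\bar\gamma^M\cdot\eta_M$ lives in the finite group $F_0$, so after passing to a multiple of $N_0$ on which conjugation by $\bar\gamma$ becomes trivial on $F_0' = \langle \eta_N\rangle$, the map $N \mapsto \eta_N$ is a homomorphism from $N_0\Z$ into a finite group and hence has nontrivial kernel. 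Your preliminary reduction (that a single $N$ with $[\gamma^N,k'] \in K$ suffices, via $[ab,c]=[a,c]^b[b,c]$ and normality of $K$) is also clean and correct. The only cosmetic remark is that the successive enlargements of $N_0$ could be stated once up front as ``choose $N_0$ divisible enough,'' but you already flag this and it causes no trouble.
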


For the rest of this subsection, let $G$ be a locally compact second countable topological group. Let $\Ga < G$ be a lattice, and let $K <\Gamma$ be a subgroup.

\begin{defn}\label{def-commoncomm} An element
$g \in G$ is said to be a \emph{common commensurator} of $\Gamma, K$, if
$g \in \comme_G (K) \cap \comme_G (\Gamma)$.
\end{defn}

We recall a Theorem due to Borel that any commensurator in a semi-simple Lie group of a 
Zariski dense subgroup $K$ of an arithmetic lattice $\Gamma$ is a common commensurator.

\begin{prop}\label{commens=commesn}\cite[Theorem 2]{borel}  \cite[p. 123]{zimmer-book}
	Let $\Ga < G$ be an arithmetic lattice in a  semi-simple algebraic $\Q$--group and let $K < \Ga$ be
	a Zariski dense subgroup. Then $\comme_G(K)<\comme_G(\Ga)$.
	Suppose furthermore that the center of $G$ is trivial. Then
	$\comme_G(\Gamma)$ coincides with the $\Q$--points of $G$.
\end{prop}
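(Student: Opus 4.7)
The plan is to derive both assertions from the rigidity of the $\Q$-structure in the presence of a Zariski dense subgroup of $\Q$-points. First I would fix a faithful $\Q$-rational embedding $G\hookrightarrow GL_n$ inducing the arithmetic structure; after replacing $\Ga$ by a commensurable subgroup we may assume $\Ga\subseteq G(\Z)$, and in particular $\Ga,K\subseteq G(\Q)$. In these coordinates $G(\Q)$ commensurates $\Ga$ automatically (since $GL_n(\Q)$ commensurates $GL_n(\Z)$), so the core claim to establish is $\comme_G(K)\subseteq G(\Q)\cdot Z(G)(\R)$. Both factors then commensurate $\Ga$, yielding the first assertion; the second follows by specializing to $K=\Ga$, using Borel density to see that $\Ga$ is itself Zariski dense in the semisimple $G$ without compact factors.

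Given $g\in\comme_G(K)$, the next step is to set $H:=K\cap g^{-1}Kg$. Since $[K:H]<\infty$ and $K$ is Zariski dense in the connected semisimple group $G$, the subgroup $H$ is also Zariski dense. By construction both $H$ and $gHg^{-1}$ lie in $K\subseteq G(\Q)$. Consequently, the inner automorphism $\sigma_g\colon G\to G$, a priori only an $\R$-morphism of algebraic varieties, carries the Zariski dense $\Q$-rational set $H$ into $G(\Q)$.

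The crucial rigidity step is to upgrade this observation to the conclusion that $\sigma_g$ is defined over $\Q$. The plan is to work in coordinates: for each $f\in\Q[G]$ expand $\sigma_g^\ast f\in\R[G]=\Q[G]\otimes_\Q\R$ in a fixed $\Q$-basis as $\sum c_i f_i$ with $c_i\in\R$. The hypothesis gives $\sum c_i f_i(h)\in\Q$ for every $h\in H$, and Zariski density of $H$ in $G_\Q$ yields enough $\Q$-linear constraints among the $c_i$ to force $c_i\in\Q$. Hence $\sigma_g\in\mathrm{Aut}_\Q(G)$, and reading off $g$ modulo $Z(G)(\R)$ identifies it with an element of $G(\Q)$. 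Applying the same argument to $K=\Ga$ then gives $\comme_G(\Ga)\subseteq G(\Q)\cdot Z(G)(\R)$; the reverse inclusion is immediate, and under the trivial-center assumption this becomes the desired equality $\comme_G(\Ga)=G(\Q)$. I expect the rigidity step to be the genuine obstacle: translating Zariski density of a set of $\Q$-points into $\Q$-rationality of a morphism is exactly where Borel's theorem does its real work, and some care is needed both in bookkeeping the coordinate ring and in handling the finite ambiguity coming from $Z(G)(\R)$.
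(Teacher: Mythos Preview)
The paper does not supply its own proof of this proposition: it is stated with citations to \cite[Theorem 2]{borel} and \cite[p.~123]{zimmer-book} and used as a black box. So there is no in-paper argument to compare against.

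Your sketch follows the standard line of Borel's original proof and Zimmer's exposition: reduce to showing that conjugation by $g$ is a $\Q$-defined automorphism by exploiting that it carries a Zariski-dense set of $\Q$-points into $G(\Q)$. This is the right strategy, and you have correctly located the crux. One point deserves tightening: the passage ``Zariski density of $H$ in $G_\Q$ yields enough $\Q$-linear constraints among the $c_i$ to force $c_i\in\Q$'' is not a proof as written. Knowing $\sum c_i f_i(h)\in\Q$ for all $h\in H$ gives relations over $\Q$ among the \emph{values} $f_i(h)$, not directly among the $c_i$; one still has to argue that no nontrivial $\R$-linear combination of $\Q$-rational regular functions can be $\Q$-valued on a Zariski-dense set of $\Q$-points without the coefficients themselves being rational. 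The clean way to finish (and the way Borel and Zimmer do) is to pass to the graph of $\sigma_g$ inside $G\times G$: this graph is the Zariski closure of the $\Q$-rational set $\{(h,ghg^{-1}):h\in H\}$, hence is defined over $\Q$, and therefore $\sigma_g$ is a $\Q$-morphism. That bypasses the coordinate-ring bookkeeping entirely.
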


The following theorem of Fisher-Mj-van Limbeek ensures that
any commensurator  in a semi-simple Lie group of a 
Zariski dense \emph{normal} 
subgroup $K$ of an arithmetic lattice $\Gamma$ is a common commensurator.

\begin{prop}\label{fmv-normal}\cite{fmv}
Let $K<\Gamma$ be a Zariski dense normal subgroup of a lattice in a rank one Lie group $G$. If $K$ has indiscrete commensurator, $\Gamma$ is arithmetic. Hence, $\comme_G(K)<\comme_G(\Ga)$.
\end{prop}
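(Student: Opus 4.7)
The plan is to reduce the proposition to two classical ingredients: Margulis's commensurator criterion for rank one lattices (which asserts that $\Gamma$ is arithmetic if and only if $\comme_G(\Gamma)$ is indiscrete in $G$) and Borel's theorem, Proposition~\ref{commens=commesn}. The target claim is the containment $\comme_G(K)\subseteq \comme_G(\Gamma)$: once it is established, the indiscreteness hypothesis on $\comme_G(K)$ automatically passes to $\comme_G(\Gamma)$, Margulis forces $\Gamma$ to be arithmetic, and Borel's theorem, applied to the Zariski dense subgroup $K$ of the arithmetic lattice $\Gamma$, recovers both conclusions of the proposition.

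The setup is immediate. Since $K$ is normal in $\Gamma$, we have $\Gamma \subseteq N_G(K) \subseteq \comme_G(K)$, so $\comme_G(K)$ already contains both the lattice $\Gamma$ and the Zariski dense subgroup $K$; in particular $\comme_G(K)$ itself is Zariski dense in $G$. Replacing $G$ by its adjoint form if necessary, I will assume $G$ has trivial center.

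The heart of the argument is the containment $\comme_G(K)\subseteq \comme_G(\Gamma)$. For a given $g\in \comme_G(K)$, set $K_0 := K\cap g K g^{-1}$. Then $K_0$ is finite index in both $K$ and $g K g^{-1}$, and in particular Zariski dense. The $\Gamma$-action by conjugation on the finite coset space $K/K_0$ has finite image, so a finite index subgroup $\Gamma_1\leq \Gamma$ normalizes $K_0$, and symmetrically a finite index subgroup $\Gamma_2\leq g\Gamma g^{-1}$ normalizes $K_0$. Thus $\Gamma_1,\Gamma_2 \subseteq N_G(K_0)$. I now claim $N_G(K_0)$ is discrete in $G$: its identity component is connected, acts trivially on the discrete group $K_0$ by conjugation, hence centralizes $K_0$, and by Zariski density it must lie in $Z(G)$, which is trivial. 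Therefore $N_G(K_0)$ is a discrete subgroup of $G$ containing the lattice $\Gamma_1$, which forces $N_G(K_0)$ itself to be a lattice with $[N_G(K_0):\Gamma_1]<\infty$, and symmetrically $[N_G(K_0):\Gamma_2]<\infty$. Hence $\Gamma_1$ and $\Gamma_2$ are commensurable, so $g\in \comme_G(\Gamma)$.

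The main obstacle is this isolation of $N_G(K_0)$ as a discrete group sandwiched between the two lattices $\Gamma_1$ and $\Gamma_2$; the argument depends crucially on Zariski density of $K$ (to promote the triviality of the centralizer of $K_0$) and on the finite-covolume comparison that turns a discrete overgroup of a lattice into a lattice. Once $\comme_G(K)\subseteq \comme_G(\Gamma)$ is in hand, indiscreteness of $\comme_G(\Gamma)$ follows from indiscreteness of $\comme_G(K)$, Margulis's criterion makes $\Gamma$ arithmetic, and Proposition~\ref{commens=commesn} delivers the conclusion.
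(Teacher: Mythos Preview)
The paper does not prove this proposition; it is quoted from \cite{fmv} without argument, so there is no ``paper's own proof'' to compare against. I will therefore evaluate your argument on its own merits.

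There is a genuine gap. Your key sentence ``The $\Gamma$-action by conjugation on the finite coset space $K/K_0$ has finite image, so a finite index subgroup $\Gamma_1\le\Gamma$ normalizes $K_0$'' does not parse: conjugation by $\gamma\in\Gamma$ sends $K_0$ to $\gamma K_0\gamma^{-1}$, which is another finite-index subgroup of $K$, but there is no action of $\Gamma$ on $K/K_0$ unless $\Gamma$ already normalizes $K_0$. What you presumably intend is that $\Gamma$ permutes the finite-index subgroups of $K$ of index $[K:K_0]$, and that there are only finitely many of these, so the stabilizer of $K_0$ has finite index in $\Gamma$. That reasoning requires $K$ to be \emph{finitely generated}. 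Nothing in the hypotheses guarantees this: a Zariski dense normal subgroup of a rank one lattice can be infinitely generated (for instance $K=[\Gamma,\Gamma]$ for a surface group $\Gamma$), and an infinitely generated free group has infinitely many subgroups of any given finite index. In that situation there is no reason for $N_\Gamma(K_0)$ to have finite index in $\Gamma$, and the bridge between $\Gamma$ and $g\Gamma g^{-1}$ via the discrete group $N_G(K_0)$ collapses.

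Note also that if your argument worked, it would prove $\comme_G(K)\subseteq\comme_G(\Gamma)$ for \emph{every} Zariski dense normal $K$, with no indiscreteness hypothesis at all; the appeal to Margulis and then to Borel at the end would be redundant. This is a signal that the argument is too strong to be correct in general. Under the extra assumption that $K$ is finitely generated your proof is fine (and the discreteness of $N_G(K_0)$ via Zariski density is handled correctly), but the result in \cite{fmv} covers the infinitely generated case and requires substantially more work.
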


Proposition~\ref{fmv-normal} was used to establish the following main theorem 
of \cite{fmv}:
\begin{theorem}
\label{fmv-normalthm}
Let $K<\Gamma$ be a thin normal subgroup of a lattice in a rank one Lie group $G$. Then $\comme_G(K)$ is discrete.
\end{theorem}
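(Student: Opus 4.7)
The plan is a proof by contradiction. Suppose that $\comme_G(K)$ is not discrete in $G$. Since $K$ is thin, it is by definition Zariski dense in $G$ and of infinite index in the rank-one lattice $\Gamma$, and by hypothesis it is normal in $\Gamma$. Hence the hypotheses of Proposition~\ref{fmv-normal} are satisfied, and we obtain two conclusions at once: first, $\Gamma$ is arithmetic, and second, $\comme_G(K)\subset \comme_G(\Gamma)$. In particular, every $g\in\comme_G(K)$ is automatically a common commensurator of $\Gamma$ and $K$ in the sense of Definition~\ref{def-commoncomm}, a structural improvement on the hypothesis.

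Next I would invoke Theorem~\ref{thm:comm-nonab} for each such $g$: the conjugate subgroup $K^g$ preserves $Q$-lines with holes, where $Q=\Gamma/K$. Because $K$ has infinite index in $\Gamma$, the quotient $Q$ is infinite and contains many infinite cyclic subgroups, so this is a genuine constraint. Concretely, for every $\gamma\in\Gamma$ there exists $N=N(\gamma,g)>0$ such that $[k,\gamma^{mN}]\in K$ for all $k\in K^g$ and all $m\in\mathbb{Z}$; equivalently, $K^g$ almost centralizes $\gamma$ modulo $K$ for every $\gamma\in\Gamma$.

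The remaining, and hardest, step is to translate this combinatorial constraint on $\{K^g : g\in\comme_G(K)\}$ into discreteness of $\comme_G(K)$ itself. The plan is to combine two facts: the Zariski density of $K$ in the rank-one group $G$, which forces elements that ``almost centralize modulo $K$'' a Zariski-dense collection of cyclic subgroups to satisfy strong rigidity; and the arithmetic description $\comme_G(\Gamma)=G(\Q)$ from Proposition~\ref{commens=commesn}, which exhibits $\comme_G(K)$ as contained in an arithmetic group. Together these should force $\comme_G(K)$ into a countable, and in fact discrete, subset, contradicting the starting assumption.

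The main obstacle is precisely the last step: extracting honest discreteness from the seemingly soft ``preservation of $Q$-lines with holes'' hypothesis. The rank-one hypothesis is indispensable at exactly this point, both for the rigidity of Zariski-dense subgroups (via, e.g., limit set and boundary considerations peculiar to rank one) and for the sharp arithmetic control of $\comme_G(\Gamma)$; correspondingly, any attempt to generalize beyond rank one would require replacing Proposition~\ref{fmv-normal} with something else entirely.
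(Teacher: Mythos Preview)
The paper does not actually prove this theorem: it is stated as the main result of \cite{fmv}, with only the remark that Proposition~\ref{fmv-normal} was an ingredient there. So there is no in-paper argument to compare against beyond that single hint, which you have correctly picked up on in your first paragraph.

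Your proposal, however, has a genuine gap precisely where you flag it. The reduction via Proposition~\ref{fmv-normal} to the arithmetic case with $\comme_G(K)\subset\comme_G(\Gamma)$ is correct, and invoking Theorem~\ref{thm:comm-nonab} is a reasonable next move. But the final step does not go through as you describe. Containment in $\comme_G(\Gamma)=G(\Q)$ yields only countability, not discreteness: $G(\Q)$ is \emph{dense} in $G$, so ``contained in an arithmetic group'' is no obstruction whatsoever to indiscreteness. Moreover, the conclusion of Theorem~\ref{thm:comm-nonab} constrains the conjugates $K^g$, not the commensurating elements $g$ themselves; passing from the former to the latter is exactly the missing content. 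The heuristic ``Zariski density plus arithmeticity forces discreteness'' is not an argument, and indeed cannot be, since both ingredients are compatible with dense commensurators in general. The actual proof in \cite{fmv} requires substantial additional rank-one input (harmonic analysis / boundary methods specific to that setting) that your sketch does not supply.
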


\subsection{Burger-Mozes examples}\label{sec-bm} The purpose of this subsection is to investigate an example due to Burger and Mozes \cite{bm-divergence}. Let $T_{2d}$ denote the regular $2d-$valent tree, and $G=Aut (T_{2d})$. 
Let $K_d < G$ be the fundamental group of the Cayley graph of $\Z^d$ with respect to the standard generators thought of as deck transformations of $T_{2d}$ with quotient the Cayley graph of $\Z^d$ w.r.t. the standard generators. 

\begin{prop}\cite[Proposition 8.1]{bm-divergence}\label{prop-bm}
The commensurator of $K_d $ is dense in $G$.
\end{prop}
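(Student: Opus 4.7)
Recall that the topology on $G = \aut(T_{2d})$ is that of pointwise convergence, so a basis of neighbourhoods of the identity is given by pointwise stabilisers of finite subtrees. Density of $\comme_G(K_d)$ in $G$ is therefore equivalent to the following local-approximation statement: for every $g \in G$ and every finite subtree $F \subset T_{2d}$, there exists $c \in \comme_G(K_d)$ with $c|_F = g|_F$. My plan is to produce such $c$ as a lift of a graph isomorphism between two finite covers of $X := K_d\backslash T_{2d} = \mathrm{Cay}(\Z^d)$.

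First, I would record the geometric reformulation of the commensurator: $c \in \comme_G(K_d)$ iff there exist finite-index subgroups $A,B < K_d$ with $cAc^{-1}=B$, equivalently, iff $c$ is a lift of a graph isomorphism $\phi \colon X_A \to X_B$ between two finite covers $X_A = A\backslash T_{2d}$ and $X_B = B\backslash T_{2d}$ of $X$, where the covering maps to $X$ are ignored and $X_A, X_B$ are viewed as abstract graphs. Thus it suffices to construct $X_A, X_B, \phi$ so that the lift of $\phi$ agrees with $g$ on $F$.

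Next, using that $K_d < F_d$ is a subgroup of a free group and hence residually finite, I would choose a finite-index subgroup $A < K_d$ deep enough so that, for fixed lifts, both $F$ and $g(F)$ embed into $X_A$ under the intermediate covering $T_{2d}\to X_A$; residual finiteness lets us separate the finitely many pairs of $K_d$-translates that witness any failure of injectivity on $F \cup g(F)$. The goal then becomes the construction of a finite cover $X_B$ of $X$ together with a graph isomorphism $\phi \colon X_A \to X_B$ whose induced map on neighbourhoods of the embedded $F$ matches $g|_F$ identifying the image of $F$ with the image of $g(F)$.

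The key combinatorial step is the existence of $X_B$ and $\phi$ with this prescribed local behaviour. Here one exploits the fact that $X = \mathrm{Cay}(\Z^d)$ has a very flexible theory of finite covers: a finite $2d$-regular graph with edges partitioned into $d$ orientation classes covers $X$ as soon as each 4-cycle with the correct pair of edge types closes up. Because $\Z^d$ is abelian, the only local relations are these commutation squares, so one has great freedom to ``re-glue'' a finite cover along prescribed pieces. Combined with Leighton's graph covering theorem (two finite graphs with a common cover admit a common finite cover), this permits the construction of $X_B$ and $\phi$ realising the prescribed identification in a neighbourhood of $F$. Lifting $\phi$ produces $c \in \aut(T_{2d})$ with $cAc^{-1} = \pi_1(X_B)$, hence $c \in \comme_G(K_d)$, and $c|_F = g|_F$ by construction.

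The main obstacle is making the combinatorial step precise: one must ensure that the local matching prescribed by $g|_F$ can be promoted to a globally defined isomorphism between finite covers of $X$, while simultaneously preserving the cover structure on both sides. This is where the abelian structure of $\Z^d$ is essential; for a generic lattice $\Lambda < F_d$ in place of $K_d$, the analogous re-gluing would be obstructed by non-trivial local relations, and density of the commensurator would fail. I expect the cleanest way to carry out the construction is to take $X_A = X_B$ to be a sufficiently symmetric finite cover of $X$ (e.g.\ obtained from a finite-index characteristic sublattice of $\Z^d$ together with an auxiliary branched cover of the resulting torus graph) whose automorphism group is large enough to realise the required local permutation.
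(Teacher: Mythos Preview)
The paper does not give its own proof of this proposition: it is quoted directly from Burger--Mozes \cite{bm-divergence}. The only information the paper supplies is the one-line summary in Remark~\ref{rmk-bm2}: the argument in \cite{bm-divergence} ``proceeds by passing to a finite-sheeted cover of the Cayley graph $\QQ$ and then switching labels on adjacent edges in a finite ball.'' Your overall strategy---approximate an arbitrary $g\in G$ on a finite subtree by a lift of a graph isomorphism between finite covers of $X=\mathrm{Cay}(\Z^d)$---is exactly this outline, so at the level of plan you are aligned with the source.

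That said, the combinatorial core of your sketch has two genuine problems. First, your proposed characterisation of finite covers of $X$ is backwards. A labelled oriented $2d$-regular graph $Y$ in which every commutator path $a_ia_ja_i^{-1}a_j^{-1}$ closes up satisfies $\pi_1(Y)\supset K_d$, i.e.\ $Y$ is a \emph{quotient} of $X$, not a cover of it. Conversely, in a genuine finite cover $Y=H\backslash T_{2d}$ with $H<K_d$ of finite index, the paths $g[a_i,a_j]g^{-1}$ land in $K_d$ but typically \emph{not} in $H$, so commutator squares need not close. The correct description of a cover of $X$ is that every based loop in $Y$ has exponent-sum zero in each generator; this is a global, not a local, condition, and it is precisely what makes the ``re-gluing'' step delicate. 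Second, Leighton's theorem is not the right tool: it concerns \emph{finite} graphs and produces a common finite cover, whereas your $X_A$ and $X_B$ are infinite (they finitely cover the infinite graph $X$), and in any case you need an isomorphism with prescribed behaviour on a specified finite piece, which Leighton does not provide.

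What actually makes the Burger--Mozes construction go through is more hands-on than your last paragraph suggests. One passes to a concrete finite cover of $X$ and exhibits, by an explicit cut-and-reglue (the ``label switching in a finite ball''), an automorphism of that cover as an abstract graph that realises a prescribed local permutation; the abelian structure enters because permuting the generator labels is an honest symmetry of $X$, so the switched graph is still a cover of $X$. Your proposal identifies the target but stops short of this construction; to complete it you should replace the appeal to Leighton by a direct description of the label-switch and a verification that the resulting graph still covers $X$.
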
 

\begin{rmk}\label{rmk-bm}
We note, in passing, that \cite[Theorem 8.3]{bm-divergence} deals with a tree-lattice
and proves that it has dense commensurator  in $G$. In fact, that the  example in \cite[Theorem 8.3]{bm-divergence} is a non-uniform tree lattice  can be seen by computing the volume, which is given by $\frac{1}{b} + \sum_1^\infty \frac{1}{2^i}$
(where we are borrowing the notation from \cite[Theorem 8.3]{bm-divergence}).

Our focus here, however, is on infinite covolume discrete subgroups of $G$, and hence we analyze only the example from
Proposition~\ref{prop-bm}. 
\end{rmk}

The Burger-Mozes examples $K_d$ in Proposition~\ref{prop-bm} are normal subgroups 
 of lattices $\Gamma$ in $G=\aut(T_{2d})$, where $\Gamma$ is a free group on $d$ generators. Despite the fact that $K_d$ has dense commensurator in $G$, 
  the \emph{common commensurator} of $K_d, \Gamma$ turns out to be discrete as we show below. We shall need an analog of \cite[Lemma 2.6]{koberda-mj} before proceeding;
  the proof is quite different from \cite{koberda-mj} and uses work of Bass-Lubotzky
  \cite[Proposition 6]{bl-book}.
  Let  $G, \Gamma, K_d$ be as above. Let  $L=\comme_G(K_d) \cap \comme_G(\Gamma)$.
  Let $L_0 = \langle K_d^l \rangle$ denote the subgroup of $L$ generated by the subgroups $K_d^l$  
  as $l$ ranges over $ L$.   Clearly, $L_0 < L$. Let $L_1 =   \langle L_0, \Gamma \rangle$
  be the subgroup of $L$ generated by $L_0$ and $\Gamma$ (clearly, $\Gamma < \comme_G(K_d) \cap \comme_G(\Gamma)=L$). 
  
  \begin{lemma}\label{lem-suffdiscrete}
  Let $L, L_1$ be as above. Then  $L$ is discrete if and only if $L_1$ is discrete.
  \end{lemma}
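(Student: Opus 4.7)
The only-if direction is immediate: $L_1$ is a subgroup of $L$, so discreteness of $L$ trivially passes to $L_1$. The content is therefore the converse, and the strategy is to show that discreteness of $L_1$ forces $[L:L_1]<\infty$. Combined with \cite[Proposition 6]{bl-book}, which gives $[L_1:\Gamma]<\infty$ whenever $L_1$ is a discrete overgroup of the uniform lattice $\Gamma$ in $G=\aut(T_{2d})$, this yields $[L:\Gamma]<\infty$; since $L$ is then a finite union of cosets of the discrete subgroup $\Gamma$, $L$ itself is discrete.

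Assume $L_1$ is discrete, so $[L_1:\Gamma]<\infty$ by the above. Observe that $L_0$ is by construction the normal closure of $K_d$ in $L$, hence $L_0\lhd L$. It follows that $L_1=L_0\Gamma$ and that the quotient $\bar L:=L/L_0$ contains $\bar\Gamma:=L_1/L_0\cong \Gamma/(\Gamma\cap L_0)$. Since $K_d\subseteq \Gamma\cap L_0$, the group $\bar\Gamma$ is a quotient of $Q=\Gamma/K_d\cong\Z^d$, and hence is finitely generated abelian. Showing $[L:L_1]<\infty$ is therefore equivalent to showing $[\bar L:\bar\Gamma]<\infty$.

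For the main step, apply Theorem~\ref{thm:comm-nonab}: each $l\in L=\comme_G(\Gamma)\cap\comme_G(K_d)$ preserves $Q$--lines with holes, so by Definition~\ref{def-pa}, for every $\gamma\in\Gamma$ there exists $N=N(\gamma,l)>0$ with $[\gamma^{mN},l]\in K_d\subseteq L_0$ for all $m\in\Z$. Reducing modulo $L_0$, this says that $\bar l$ centralizes the finite-index subgroup $\langle\bar\gamma^N\rangle$ of $\langle\bar\gamma\rangle$ for each $\gamma$. Choosing a finite generating set $\gamma_1,\ldots,\gamma_k$ of $\Gamma$ and letting $N$ be a common multiple of $N(\gamma_i,l)$, one concludes that every $\bar l\in\bar L$ centralizes a finite-index subgroup of the finitely generated abelian group $\bar\Gamma$.

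The main obstacle is to upgrade this ``each $\bar l$ eventually centralizes $\bar\Gamma$'' conclusion to genuine finiteness $[\bar L:\bar\Gamma]<\infty$. Here one expects to combine the centralizer structure just obtained with the rigidity of the $L$-action on the tree $T_{2d}$: the commutator condition severely constrains how a coset $\bar l\in\bar L$ can twist $\bar\Gamma\cong\Z^d/(\text{finite})$ by conjugation, and together with the Bass--Lubotzky theory of subgroups of $\aut(T_{2d})$ containing a tree lattice, this should force $\bar L/\bar\Gamma$ to be finite. Once this step is complete, $[L:L_1]=[\bar L:\bar\Gamma]<\infty$, so $[L:\Gamma]<\infty$, and $L$ is discrete as required.
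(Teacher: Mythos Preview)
Your proposal has a genuine gap at two points, and the overall strategy diverges from the paper's.

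First, the application of Theorem~\ref{thm:comm-nonab} is incorrect. That theorem asserts that for $g\in\comme_G(\Gamma)\cap\comme_G(K_d)$ the \emph{elements of the conjugate subgroup} $K_d^{g}$ preserve $Q$--lines with holes; it does \emph{not} say that $g$ itself does. In the paper's proof of Proposition~\ref{prop-disccomm} this is used only to conclude that $L_0$ and then $L_1$ preserve $Q$--lines with holes, never that an arbitrary $l\in L$ does. Your deduction that $[\gamma^{mN},l]\in K_d$ for $l\in L$ is therefore unjustified, and there is no reason to expect it: an element commensurating both $\Gamma$ and $K_d$ induces a virtual automorphism of $Q\cong\Z^d$, which corresponds to a matrix in $\mathrm{GL}_d(\Q)$ rather than the identity.

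Second, even granting your centralizer conclusion, the argument cannot close. If every $\bar l\in\bar L$ centralizes a finite-index subgroup of the finitely generated abelian group $\bar\Gamma$, then (since an automorphism of $\Z^r$ fixing a finite-index subgroup is the identity) $\bar\Gamma$ would be central in $\bar L$. But centrality of $\bar\Gamma$ places no bound whatsoever on $[\bar L:\bar\Gamma]$; one could have $\bar L=\bar\Gamma\times A$ for $A$ arbitrary. Your appeal to ``rigidity of the $L$-action on the tree'' and ``Bass--Lubotzky theory'' is not a substitute for an argument here, and indeed no abstract argument of this shape can work.

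The paper proceeds quite differently. It argues by contradiction: if $L$ were indiscrete, choose $g_i\in L$ with $g_i\to\mathrm{id}$. Using discreteness of $L_1$ (and that it contains all $K_d^{g_i}$) one passes to a subsequence with $K_d^{g_i}=K_d^{g_j}$ for all $i,j$, and after replacing $g_i$ by $g_i^{-1}g_{i+1}$ obtains nontrivial elements normalizing $K_d$ and converging to the identity, so $N_G(K_d)$ is indiscrete. The contradiction then comes from the specific structure of $K_d$: since $T_{2d}/K_d$ is the standard Cayley graph $\GG_d$ of $\Z^d$, there is an exact sequence $1\to K_d\to N_G(K_d)\to\aut(\GG_d)\to 1$, and $\aut(\GG_d)$ is discrete, forcing $N_G(K_d)$ discrete. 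The key input you are missing is precisely this use of the discreteness of $\aut(T_{2d}/K_d)$; the argument does not go through an index bound $[L:L_1]<\infty$ at all.
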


  \begin{proof} Since $L_1 < L$, the only if direction is immediate.
  	
  Conversely, suppose that $L_1$ is discrete. Since
  $\Gamma < L_1$, and $\Gamma$ is a lattice in $G$ acting transitively on $T_{2d}$, 
  it follows that $[L_1: \Gamma] < \infty$. Hence there exist finitely many coset representatives $l_1, \cdots l_k$ of $\Gamma$ in $L_1$, and hence finitely many
  conjugates $K_d^{l_i}, i=1, \cdots, k$ of $K_d$ in $L_1$.

  If $L$ is indiscrete, there exists an infinite sequence $g_i \in L$ such that $g_i$
  converges to the identity. Then, passing to a subsequence if necessary, we may assume that $K_d^{g_i}=K_d^{g_j}$ for all $i, j$ since $L_1$ is discrete. Finally, replacing $g_i$ by
  $g_i^{-1}g_{i+1}$, we may assume that $K_d^{g_i}=K_d$ for all $i$. It follows that 
  $K_d$ has indiscrete normalizer $N_G(K_d)$. By \cite[Proposition 6.2(b)]{bl-book},
  $N_G(K_d)$ is a closed, and hence closed indiscrete subgroup of $G$. 
  
 We now show that $N_G(K_d)$ is actually discrete, which gives a contradiction. Let $\GG_d$ be the Cayley graph of $\mathbb Z^d$. Then $\GG_d$ is isomorphic to the orbit space $T_{2d}/K_d$. Let $h:T\to T$ be an element in $N_G(K_d)$. As $h$ sends $K_d$ orbits to $K_d$ orbits. we know $h$ descends to an isometry $\bar h:\GG_d\to \GG_d$. Thus $N_G(K_d)$ fits into an exact sequence $1\to K_d\to N_G(K_d)\to G_d\to 1$, where $G_d$ is the automorphism group of $\GG_d$. As $G_d$ is discrete, we know $N_G(K_d)$ is discrete.
  
  \end{proof}

  \begin{prop}\label{prop-disccomm}
Let $G, \Gamma, K_d$ be as above. Then $L=\comme_G(K_d) \cap \comme_G(\Gamma)$ is discrete.
  \end{prop}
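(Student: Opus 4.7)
By Lemma~\ref{lem-suffdiscrete}, it suffices to prove that $L_1=\langle L_0,\Gamma\rangle$ is discrete. My plan is to establish the stronger inclusion $L_1\subseteq N_G(K_d)$; since $N_G(K_d)$ is discrete (as verified at the end of the proof of Lemma~\ref{lem-suffdiscrete} via the exact sequence $1\to K_d\to N_G(K_d)\to\aut(\GG_d)\to 1$), this suffices. Because $\Gamma\subseteq N_G(K_d)$ is immediate from $K_d\triangleleft\Gamma$, the problem reduces to showing $L_0\subseteq N_G(K_d)$: for every $g\in L$ and every $k\in K_d$, the element $h:=g^{-1}kg$ must satisfy $hK_d h^{-1}=K_d$.

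Fix such $h$. The key input is Theorem~\ref{thm:comm-nonab}: applied to $g\in L$, it yields that $K_d^g$ preserves $Q$-lines with holes, where $Q=\Gamma/K_d\cong\Z^d$; in particular both $h$ and $h^{-1}$ preserve $Q$-lines with holes. Writing $\phi:\Gamma\to\Z^d$ for the abelianisation and $\Gamma_0=\Gamma\cap h\Gamma h^{-1}$, Definition~\ref{def-pa} together with the torsion-freeness of $\Z^d$ (which lets us cancel factors of $N$) forces the homomorphism $\gamma\mapsto\phi(h^{-1}\gamma h)$ to agree with $\phi|_{\Gamma_0}$. Comparing kernels yields $K_d\cap\Gamma_0=h^{-1}K_d h\cap\Gamma_0$, so $K_d$ and $h^{-1}K_d h$ share the common finite-index subgroup $K_d\cap\Gamma_0$.

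The hard part will be to upgrade this finite-index agreement to the full normalisation $hK_d h^{-1}=K_d$. Using the concrete form $h=g^{-1}kg$, one computes $hK_d h^{-1}=g^{-1}k(gK_d g^{-1})k^{-1}g$, so the equality is equivalent to $k\in N_G(gK_d g^{-1})$. The finite-index subgroup $K_d\cap g\Gamma g^{-1}$ of $K_d$ automatically normalises $gK_d g^{-1}$ (it sits inside $g\Gamma g^{-1}$, which normalises its own commutator subgroup $gK_d g^{-1}$); extending this to all of $K_d$ is where I expect the main difficulty to lie. The ingredients I would combine are the torsion-free analysis above applied to arbitrary powers $(k')^N$ for $k'\in K_d$, the commensurability $gK_d g^{-1}\sim K_d$, and the characterisation $K_d=[\Gamma,\Gamma]$ as the commutator subgroup of $\Gamma$ (which pins down $K_d$ canonically from $\Gamma$). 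Once $L_0\subseteq N_G(K_d)$ is secured, $L_1\subseteq N_G(K_d)$ follows, so $L_1$ is discrete, and Lemma~\ref{lem-suffdiscrete} then delivers the discreteness of $L$.
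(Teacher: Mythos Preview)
Your proposal is incomplete, and the gap you flag is genuine and not closed by the tools you suggest. The passage from ``$K_d$ and $h^{-1}K_dh$ share a common finite-index subgroup'' to ``$hK_dh^{-1}=K_d$'' is exactly the heart of the matter. The characterisation $K_d=[\Gamma,\Gamma]$ does not help: it only yields $hK_dh^{-1}=[h\Gamma h^{-1},h\Gamma h^{-1}]$, the commutator subgroup of a group merely \emph{commensurable} with $\Gamma$, and there is no mechanism forcing this to equal $[\Gamma,\Gamma]$ on the nose. Your observation that $K_d\cap g\Gamma g^{-1}$ normalises $gK_dg^{-1}$ is correct but recovers only the commensurability information you already had from $g\in\comme_G(K_d)$. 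In fact it is not at all clear that the target inclusion $L_0\subseteq N_G(K_d)$ is even true, and the paper never claims it.

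The paper's proof takes a completely different route: it shows directly that any $g\in L_1$ fixing a vertex $\beta\in T_{2d}$ must be the identity. The mechanism is a Cayley-graph length argument. For each generator $a_i$, set $a_i'=\beta a_i\beta^{-1}$; the $Q$-line-preserving property gives $N$ with $(a_i')^{N}\equiv((a_i')^{g})^{N}\pmod{K_d}$. Writing $((a_i')^{g})^{N}=\beta b_i\beta^{-1}$, where $b_i$ is the length-$N$ word read off the edge path $g\sigma_i$ (with $\sigma_i$ the geodesic from $\beta$ to $\beta a_i^{N}$), one sees that $b_i$ maps to $Ne_i$ in $\Z^d$ and has word length exactly $N$; the only such word is $a_i^{N}$, so $g\sigma_i=\sigma_i$ and $g$ fixes the $1$-ball around $\beta$. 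Induction on distance finishes. The key idea you are missing is this length-versus-abelianisation constraint, which upgrades the congruence modulo $K_d$ to an exact equality of edge paths in $T_{2d}$.
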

  
  \begin{proof}
   Let $Q=\Gamma/K_d$ denote the quotient group, so that $Q$ is isomorphic to  $\Z^d$. By Theorem~\ref{thm:comm-nonab}, 
  $K_d^l$  preserves $Q-$lines with holes
 for all $l \in L$. Let $L_0 = \langle K_d^l \rangle$ denote the subgroup of $L$ generated by the subgroups $K_d^l$  
as $l$ ranges over $ L$. As noted in \cite[Observation 3.3]{koberda-mj}, $L_0$ 
 preserves $Q-$lines with holes. Since $Q$ is abelian, the conjugation action
 of $\Gamma$ on itself descends to a trivial action on $Q$. Hence, $L_1 =\langle L_0, \Gamma\rangle$  preserves $Q-$lines with holes by \cite[Observation 3.3]{koberda-mj} again. 
 
By Lemma ~\ref{lem-suffdiscrete}, the proposition would follow if we could prove that $L_1$ is discrete. Suppose not. Then there exists non-trivial $g \in L_1$ arbitrarily close to the identity of $G$. In particular, $g$ fixes some vertex of $T_{2d}$. We identify vertices of $T_{2d}$ with elements in $\Gamma$, and suppose $g$ fix the vertex which corresponds to $\beta\in \Gamma$.

Let $a_1, \cdots, a_d$ denote the standard generating set for $\Gamma$. As $T_{2d}$ is the Cayley graph of $\Gamma$, edges of $T_{2d}$ are oriented and labeled by generators of $\Gamma$.
Let $a'_i=\beta a_i\beta^{-1}$.
Then there exists $N \in \natls$ such that $$(a'_i)^{mN}
\equiv ((a'_i)^{g})^{mN}
\mod K_d$$ for all $i = 1, \cdots, d$ and $m \in \Z$. 

Let $\ell_i$ be the axis of $a'_i$ acting on $T_{2d}$. Then $\beta\in \ell_i$ and $\ell_i$ is also the axis of $(a'_i)^{N}$.
Hence $g\ell_i$ is the axis of $((a'_i)^{g})^{mN}$ 
Consider the geodesic edge path $\sigma_i$ in $\ell_i$ from $\beta$ to $\beta a_i^N$ in $T_{2d}$. Then $((a'_i)^{g})^{N}=\beta b_i\beta^{-1}$, where $b_i$ is the word in $\Gamma$ read from the edge path $g\sigma_i$ using orientation and labeling of edges. As $b_i$ and $a_i^N$ have the same image under $\Gamma\to \mathbb Z^d$ and $b_i$ is a word of length $N$, we must have $b_i=a^N_i$, which implies $g\sigma_i=\sigma_i$ and $g\ell_i=\ell_i$ for all $i$. Thus $g$ fixes the closed 1-ball around $\beta\in T_{2d}$ pointwise.




 
 
 
By induction on distance from $\beta\in T_{2d}$ and repeating the previous argument, we know
$g$ is the \emph{identity} map on  $T_{2d}$ and so $K_d^l$ must intersect some compact open subgroup of $G$ trivially. Equivalently, 
  $\comme_G(K_d) \cap \comme_G(\Gamma)$ is discrete.
  \end{proof}

 \begin{rmk}\label{rmk-bm2}
 Proposition~\ref{prop-disccomm} shows that the common commensurator of $\Gamma$ and $K_d$ is discrete, while Proposition~\ref{prop-bm} shows that the  commensurator of  $K_d$ alone in $G$ is dense. The argument in \cite{bm-divergence} proceeds by passing to a finite-sheeted cover of the Cayley graph $\QQ$ and then switching
 labels on adjacent edges in a finite ball. These elements of $G$
 \emph{ do not} come from a commensuration of the 
 free group $\Gamma$, as they do not send a finite index subgroup to another finite index
 subgroup. Nevertheless, both $\comme_G(\Gamma)$ and $\comme_G(K_d)$ are individually
 dense in $G$. 
 Thus, what Proposition~\ref{prop-disccomm} in conjunction with Proposition~\ref{prop-bm} shows is that Borel's statement Proposition~\ref{commens=commesn} fails for $G=\aut(T_{2d})$, underscoring the difference between $G$ and a semi-simple or p-adic Lie group. The hypothesis of an ambient semi-simple or p-adic Lie group in Theorem~\ref{fmv-normalthm} is thus important. 
 \end{rmk}

\subsection{Questions and Remarks}\label{sec-dhf}
We conclude with some questions and remarks aimed at extending the Greenberg-Shalom hypothesis \cite{bfmv} to arbitrary isometry groups of CAT(0) spaces and Gromov-hyperbolic spaces. We refer to \cite{caprace-monod1,caprace-monod2} for details on the structure theory of isometry groups of such spaces. For the purposes of this section $W$ will denote a complete geodesic CAT(0) space or a 
Gromov-hyperbolic space. Also, $G = Isom(W)$ will denote the group of isometries of $W$. In case $W$ is simplicial or cubical, we demand that the isometries preserve the  
simplicial or cubical structure.

\begin{defn}  \cite{caprace-monod1,caprace-monod2}
A subgroup $K < G$ is said to be \emph{geometrically dense} if 
\begin{enumerate}
\item $K$ does not leave any convex subspace of $W$ invariant when $W$ is CAT(0).
\item  the limit set of $K$ is all of $\partial W$ when $W$ is Gromov-hyperbolic.
\item $K$ does not have a global fixed point on $\partial W$.
\end{enumerate}
\end{defn}

Theorem~\ref{thm-wisecsc}  provides an example of a normal subgroup  $K<\Gamma < G$, where the (closure of the) common commensurator is indiscrete but acts cocompactly on the  tree $T_h$. Theorem~\ref{thm-bb} refines this example by giving a more sophisticated example satisfying strong finiteness properties. However, in both these cases, since $K<\Gamma$ is normal, the common commensurator contains $\Gamma$ and is therefore cocompact in $G$. We do not know if this is a general phenomenon.
\begin{qn}\label{qn-harmonic}
Let $K<\Gamma < G$ be a discrete, geometrically dense subgroup of
a lattice $\Gamma$ in $G$ such that the
closure $\mathsf{G}$ of the
 common commensurator of $K, \, \Gamma$ is indiscrete in $G$. Is $\mathsf{G}$ 
 necessarily cocompact in $G$? 
\end{qn}
Of course, if $K<\Gamma$ is itself a commensurated subgroup, then $\Gamma$ lies in the common commensurator and hence  $\mathsf{G}$  is necessarily cocompact. Thus, Question~\ref{qn-harmonic} really
asks for examples where $K$ is not a commensurated subgroup of the ambient lattice
$\Gamma$.

The next question asks about the existence of irreducible subgroups of a product of tdlc groups. We refer the reader to \cite{bfmv} for the basics of Schlichting completions and Cayley-Abels graphs.
Let $\Gamma$ be a finitely generated group commensurating $K_1, K_2 < \Gamma$ with
\begin{enumerate}
\item $<K_1, K_2> = \Gamma$
\item $K_1 \cap K_2$ trivial.
\end{enumerate}
Let $\Gamma\big/\!\big/K_1, \Gamma\big/\!\big/K_2$ denote the Schlichting completions.
Let $\GG_1, \GG_2$ be the Cayley-Abels graphs of the tdlc groups $\Gamma\big/\!\big/K_1, \Gamma\big/\!\big/K_2$, respectively. Then $\Gamma$ acts properly on   $\GG_1 \times \GG_2$, since the stabilizer of $(v,w) \in \GG_1 \times \GG_2$ is the intersection $Stab(v) \cap Stab (w)$. Hence, it is an intersection of $K_1$ with a conjugate of $K_2$ and the latter is commensurable with $K_2$. Hence $Stab(v) \cap Stab (w)$ contains a finite-index subgroup that is trivial. Since $\Gamma$ is assumed torsion-free, the stabilizer is trivial. Hence the $\Gamma$ action is free, proper on 
 $\GG_1 \times \GG_2$.

\begin{qn}
Do there exist examples as above that are not lattices in $ \GG_1 \times \GG_2$? i.e.\ does there exist $\Gamma$ as above, commensurating two subgroups $H_1, H_2$ and satisfying the two above conditions, such that under the factor-wise inclusion maps into
$ \GG_1 \times \GG_2$, $\Gamma$ is not a lattice in $ \GG_1 \times \GG_2$?
\end{qn}

We now quickly translate the approach of \cite{koberda-mj} to the discrete/simplicial setup of the present paper. In \cite{eckmann-harmonic}, harmonic  forms and the Hodge theorem were translated to the simplicial setup.
We now use this to interpret some of the results of this paper.
In the proof of Proposition~\ref{prop-startingpoint}, $K$ 
and $\pi_1(M)$ are commensurated by $F_v$.
By Theorem~\ref{thm:comm-nonab}, if $g$ commensurates both 
$K$ 
and $\Ga$, $K^g$ preserves $Q$--lines with holes. Here, $Q$ is infinite cyclic.
In particular, this may be interpreted as saying that $K^g$   preserves the rational cohomology class given by $[\phi]$.
Let $\omega$ denote the unique (discrete) harmonic form representing $[\phi]$.
(This is where we use the simplicial Hodge theorem of \cite{eckmann-harmonic}.)
Then \cite{koberda-mj},  $F_v$ preserves $\omega$ \emph{on the nose}. Hence, so does
$\bbar F_v$. Thus, the stabilizers of vertices of $T_h = \til{M}$ preserve 
$\omega$. The subgroup $\Omega$ of $\aut(T_h)$ generated by the collection $\{\bbar F_v: v \, {\rm a \, vertex\, of\, } T_h\}$ preserves $\omega$. Since each 
$\bbar F_v$ is indiscrete, so is $\Omega$.

\begin{qn}
	Do there exist examples $K<\Gamma < G=\aut{(W)}$ with $K$ normal in $\Gamma$ so that 
	\begin{enumerate}
	\item the closure $H$ of the common commensurator of $K, \Gamma$ is indiscrete, and
	\item $H$ does not preserve any discrete harmonic form on $W$.
	\end{enumerate}
\end{qn}

\end{document}